\def\dive{\operatorname{div}}
\numberwithin{equation}{section}
\newtheorem{theorem}{Theorem}[section]
\newtheorem{lemma}[theorem]{Lemma}
\newtheorem{definition}[theorem]{Definition}
\newtheorem{proposition}[theorem]{Proposition}
\newtheorem{corollary}[theorem]{Corollary}
\begin{document}
\title[\hfil Regularity of solutions to fully nonlinear elliptic equations] {Regularity of solutions to degenerate fully nonlinear elliptic equations with variable exponent}
\author[Y. Fang, V.D. R\u{a}dulescu, C. Zhang]{Yuzhou Fang, Vicen\c{t}iu D. R\u{a}dulescu$^*$ and Chao Zhang}

\thanks{$^*$Corresponding author.}

\address{Yuzhou Fang\hfill\break School of Mathematics, Harbin Institute of Technology, Harbin 150001, China}
 \email{18b912036@hit.edu.cn}

\address{Vicen\c{t}iu D. R\u{a}dulescu   \hfill\break   Faculty of Applied Mathematics,
	AGH University of Science and Technology,  Krak\'{o}w 30-059, Poland \&   Department of Mathematics, University of Craiova, Craiova 200585, Romania}
\email{radulescu@inf.ucv.ro}

\address{Chao Zhang\hfill\break School of Mathematics and Institute for Advanced Study in Mathematics, Harbin Institute of Technology, Harbin 150001, China}
 \email{czhangmath@hit.edu.cn}

\subjclass[2010]{35B65, 35J60, 35J70}   \keywords{Regularity; fully nonlinear degenerate equations; viscosity solution; double phase problem; variable exponents.}

\begin{abstract}
We consider the fully nonlinear equation with variable-exponent double phase type degeneracies
$$
\big[|Du|^{p(x)}+a(x)|Du|^{q(x)}\big]F(D^2u)=f(x).
$$
Under some appropriate assumptions, by making use of geometric tangential methods and combing a refined improvement-of-flatness approach with compactness and scaling techniques we obtain the sharp local $C^{1,\alpha}$ regularity of viscosity solutions to such equations.
\end{abstract}

\maketitle

\section{Introduction}
\label{sec0}

In this paper, we are concerned with the local regularity properties for solutions of the following fully nonlinear elliptic equation
\begin{equation}
\label{main}
\big[|Du|^{p(x)}+a(x)|Du|^{q(x)}\big]F(D^2u)=f(x)  \quad \text{in } \Omega,
\end{equation}
where $F(\cdot)$ is a uniformly $(\lambda,\Lambda)$-elliptic operator and $\Omega\subset\mathbb{R}^n$ ($n\geq2$) is a bounded domain. Equation \eqref{main} features an inhomogeneous degenerate term modelled on the double phase integrand with variable exponents
\begin{equation}
\label{0-2}
H(x,\xi)=|\xi|^{p(x)}+a(x)|\xi|^{q(x)},\quad q(x)\geq p(x)>1, a(x)\geq 0, \quad \forall x\in \Omega.
\end{equation}

From a variational point of view, \eqref{0-2} is closely related to the energy functional
\begin{equation}
\label{0-3}
u\mapsto\int(|Du|^{p(x)}+a(x)|Du|^{q(x)})\,dx,
\end{equation}
which can be regarded as a combination of the $p(x)$-growth functional and double phase possessing not only the mild phase transition but also the drastic phase transition. Such functionals were initially introduced by Zhikov in the context of homogenization (see \cite{Zhi86,Zhi95}). They provide useful models for strongly anisotropic materials whose hardening properties relevant to the exponents determining the growth order of the gradient variable switches according to the position.

These functionals with non-standard growth conditions
\begin{equation*}
u\mapsto\int_\Omega F(x,u,Du)\,dx, \quad \nu|\xi|^p\leq F(x,u,\xi)\leq L(|\xi|^q+1),
\end{equation*}
starting with the pioneering works of Marcellini \cite{Mar89,Mar91,Mar96}, have been a surge of interest over the last years. Recently, the study of nonautonomous functionals, especially for the double phase problems
$$
u\mapsto \int_\Omega (|Du|^p+a(x)|Du|^q)\,dx,
$$
has been continued in a series of remarkable papers, see for instance \cite{BCM18,CM15,CM16} and references therein. From a purely mathematical point of view, the functional \eqref{0-3}  is a natural extension of double phase functional  to the  variable exponent case. Under the assumptions that
$$
0\leq a(\cdot)\in C^{0,\gamma}, \quad 1<p(\cdot),q(\cdot)\in C^{0,\sigma},  \quad 1\leq\frac{q(x)}{p(x)}<1+\frac{\beta}{n}
$$
with $\beta:=\min\{\gamma,\sigma\}$, Ragusa and Tachikawa in \cite{RT20}  showed that the minimizer $u$ of \eqref{0-3} is locally $C^{1,\alpha}$-regular. The boundary regularity results for the variable exponent double phase problems with Dirichlet boundary conditions  were further studied by Tachikawa \cite{Tac20}. On the other hand, Byun and Lee  in \cite{BL20} considered the inhomogeneous double phase equation with variable exponents
$$
\dive(|Du|^{p(x)-2}Du+a(x)|Du|^{q(x)-2}Du)=\dive(|F|^{p(x)-2}F+a(x)|F|^{q(x)-2}F)
$$
and established the Calder\'{o}n-Zygmund type estimates
$$
(|F|^{p(x)}+a(x)|F|^{q(x)})\in L^\gamma_{\rm loc}(\Omega)\Rightarrow(|Du|^{p(x)}+a(x)|Du|^{q(x)})\in L^\gamma_{\rm loc}(\Omega), \quad \gamma>1
$$
under some suitable conditions. For more results on the double phase problems with variable exponents, we refer to \cite{BL21,MMOS19,SRRZ20} and references therein.

The non-variational counterparts of the models that have been mentioned above could be cast as degenerate fully nonlinear equations. There are an emblematic class of fully nonlinear equations with degeneracy (or singularity) exhibiting the following type
\begin{equation}
\label{0-4}
|Du|^pF(D^2u)=f.
\end{equation}
The fairly comprehensive investigation of this kind of fully nonlinear equations has been carried out. Birindelli and Demengel  \cite{BD04} established the comparison principle and Liouville-type theorems in the singular setting, and showed the regularity and uniquness of the first eigenfunction in \cite{BD10}. Alexandrov-Bakelman-Pucci estimates have also been performed for such class of equations in \cite{DFQ09,Imb11}, which were applied to deduce the Harnack inequality in \cite{DFQ10,Imb11}. In particular, Imbert and Silvestre \cite{IS13} proved the interior $C^{1,\alpha}$ estimates on the viscosity solutions to \eqref{0-4} by means of an improvement-of-flatness approach, which states that if $p\geq0$, $F(\cdot)$ is uniformly elliptic as well as the source term $f$ is bounded in $\Omega$, then, for any $U\subset\subset\Omega$, $u$ is $C^{1,\alpha}$ regular in $U$ and it holds that
$$
[u]_{1+\alpha,U}\leq C\left(\|u\|_{L^\infty(\Omega)}+\|f\|^{\frac{1}{1+p}}_{L^\infty(\Omega)}\right).
$$
Whereafter, Ara\'{u}jo, Ricarte and Teixeira in \cite{ART15} studied the optimal regularity for the viscosity solutions to \eqref{0-4}, that is, solutions are of class $C^{1,\hat{\alpha}}_{\rm loc}$ with $\hat{\alpha}=\min\{\overline{\alpha},\frac{1}{p+1}\}$. Here $\overline{\alpha}\in(0,1)$ is the H\"{o}lder index corresponding to the Krylov-Sofonov regularity for the equation $F(D^2u)=0$. It is noteworthy to mention that De Filippis in \cite{DeF21}  introduced the double phase type degeneracies to the fully nonlinear equation
$$
\big[|Du|^{p}+a(x)|Du|^{q}\big]F(D^2u)=f(x), \quad 0<p\le q,
$$
and proved the $C^{1,\gamma}$ local regularity for viscosity solutions. Moreover, the sharp local $C^{1,\gamma}$ geometric regularity estimates for bounded  viscosity solutions were obtained by da Silva and Ricarte in \cite{SR20}.  Meanwhile, under rather general conditions,  Bronzi, Pimentel, Rampasso and Teixeira in \cite{BPRT20} proved that viscosity solutions to the following variable exponent fully nonlinear elliptic equations

$$
|Du|^{p(x)}F(D^2u)=f
$$
are locally of class $C^{1,\gamma}$ for a universal constant $\gamma\in (0,1)$.  We refer the readers to \cite{BD12,BD14,BDL19,SV21} for more related results.

In this paper, motivated by the results in \cite{BPRT20, DeF21, SR20} we consider the fully nonlinear elliptic equations with variable-exponent nonhomogeneous degeneracy of the form \eqref{main}.  By making use of geometric tangential methods and combing a refined improvement-of-flatness approach with compactness and scaling techniques, we show that the viscosity solutions to \eqref{main} are locally of class $C^{1,\alpha}(\Omega)$.

\smallskip

Now we are in position to state our main result of this work.
\begin{theorem}
\label{thm0}
Suppose $u\in C(\Omega)$ is a viscosity solution to problem \eqref{main}. Under hypotheses $(A_1)$--$(A_4)$ (see Section  \ref{sec1}), there holds that $u\in C^{1,\alpha}_{\rm loc}(\Omega)$ for any $\alpha$ verifying
$$
0<\alpha<\min\left\{\overline{\alpha},\frac{1}{1+\sup_\Omega p(x)}\right\}.
$$
Moreover, we have the following H\"{o}lder estimate on the gradient of solution, that is, for any subdomain $\Omega'\subset\subset\Omega$, there is a constant $C$ depending on $n,\lambda,\Lambda,\alpha,{\rm dist}(\Omega',\Omega)$ and $\sup_\Omega p(x)$ such that
$$
\sup_{\stackrel{x,y\in\Omega'}{x\neq y}}\frac{|Du(x)-Du(y)|}{|x-y|^\alpha}\leq C\left(1+\|u\|_{L^\infty(\Omega)}+\|f\|^\frac{1}{1+\inf_\Omega p(x)}_{L^\infty(\Omega)}\right).
$$
\end{theorem}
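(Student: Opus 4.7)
The plan is to use a geometric tangential / improvement-of-flatness scheme, in the spirit of Imbert-Silvestre \cite{IS13}, De Filippis \cite{DeF21}, and Bronzi-Pimentel-Rampasso-Teixeira \cite{BPRT20}, iterated on a dyadic family of balls around each fixed point $x_0 \in \Omega'$. After a preliminary normalization $u \mapsto u/[1 + \|u\|_{L^\infty(\Omega)} + \|f\|_{L^\infty(\Omega)}^{1/(1+\inf p)}]$ and restriction to a sufficiently small ball where the continuity assumptions $(A_1)$--$(A_4)$ force small oscillations of $p(\cdot)$, $q(\cdot)$, $a(\cdot)$ and a small source, I may assume $\|u\|_{L^\infty(B_1)} \leq 1$ and $\|f\|_{L^\infty(B_1)} \leq \varepsilon$ for any preassigned $\varepsilon > 0$.

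The first tool is an \emph{approximation lemma} proved by compactness/contradiction and stability of viscosity solutions: if $\varepsilon$ is small enough, then $u$ is $\delta$-close in $L^\infty(B_{1/2})$ to a viscosity solution $h$ of the frozen-coefficient, homogeneous equation $[|Dh|^{p(x_0)} + a(x_0)|Dh|^{q(x_0)}]\,\bar F(D^2 h) = 0$, where $\bar F$ is a uniformly $(\lambda,\Lambda)$-elliptic limit of $F$. By the $C^{1,\bar\alpha}$ result of De Filippis cited above, $h$ admits a universal gradient H\"older estimate. Fix $\alpha$ with $0 < \alpha < \min\{\bar\alpha,\,1/(1+\sup p)\}$, choose $\rho \in (0,\tfrac12)$ so that $C\rho^{1+\bar\alpha} \leq \tfrac12\rho^{1+\alpha}$, and take $\delta = \tfrac12\rho^{1+\alpha}$; approximating $h$ by its first-order Taylor polynomial at the origin yields the one-step flatness improvement: there is an affine function $\ell$ with $|\ell(0)| + |D\ell| \leq C$ and $\sup_{B_\rho}|u - \ell| \leq \rho^{1+\alpha}$.

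The core step is the \emph{iteration}: for $k \geq 1$, rescale $v_k(y) := \rho^{-k(1+\alpha)}(u - \ell_k)(x_0 + \rho^k y)$ and verify, using the $1$-homogeneity of $F$, that $v_k$ is a viscosity solution of an equation of the same type in $B_1$, with shifted exponents $p_k(y) = p(x_0+\rho^k y)$ and $q_k(y) = q(x_0+\rho^k y)$; a weight $a_k(y)$ which, thanks to $q \geq p$ and $\rho < 1$, is bounded by $a(x_0+\rho^k y)$; a uniformly $(\lambda,\Lambda)$-elliptic operator $F_k$; and a source of norm $\leq \rho^{k[1-\alpha(1+p_k)]}\|f\|_\infty$. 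The strict bound $\alpha < 1/(1+\sup p)$ ensures $1 - \alpha(1+p_k) > 0$ uniformly in $k$, so the source stays within the smallness regime required by the approximation lemma; the moduli of continuity of $p,q,a$ at $x_0$ supply the smallness of the coefficient oscillations on each $B_{\rho^k}$. Applying the one-step improvement at each scale produces affine functions $\ell_k$ with $\sup_{B_{\rho^k}}|u - \ell_k| \leq \rho^{k(1+\alpha)}$ and $|D\ell_{k+1} - D\ell_k| \leq C\rho^{k\alpha}$; the slopes converge to some $b = Du(x_0)$, and a two-scale comparison yields the pointwise $C^{1,\alpha}$ estimate at $x_0$. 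A finite covering of $\Omega'$ and inverting the initial normalization give the claimed H\"older bound.

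\textbf{Main obstacle.} The delicate point is to secure the \emph{structural scale-invariance}: the class of equations of type \eqref{main} must be closed, \emph{uniformly in the parameters}, under the hyperbolic rescaling $u \mapsto \rho^{-(1+\alpha)}u(\rho\,\cdot)$, so that the same $\varepsilon(\delta)$ from the approximation lemma can be reused at every dyadic scale. This requires careful bookkeeping of both degeneracy branches $|Dv_k|^{p_k}$ and $a_k|Dv_k|^{q_k}$, which persist under the scaling precisely because $q \geq p$, together with uniform control of the exponent oscillations via $(A_1)$--$(A_3)$. A separate subtlety occurs when the tangential limit $h$ has vanishing gradient, so that the limiting double-phase equation degenerates; there one must replace the De Filippis estimate by the Krylov-Safonov $C^{1,\bar\alpha}$ bound for $\bar F(D^2 h) = 0$, which explains the presence of $\bar\alpha$ in the upper bound on $\alpha$.
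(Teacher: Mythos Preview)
Your outline follows the correct improvement-of-flatness scheme, but it misses the central structural point of the iteration. When you rescale $v_k(y):=\rho^{-k(1+\alpha)}(u-\ell_k)(x_0+\rho^ky)$, the subtraction of $\ell_k$ shifts the gradient: $Du = \rho^{k\alpha}Dv_k + b_k$ with $b_k=D\ell_k$, so $v_k$ solves
\[
\bigl[|Dv_k+\xi_k|^{p_k(y)}+a_k(y)|Dv_k+\xi_k|^{q_k(y)}\bigr]F_k(D^2v_k)=f_k(y),\qquad \xi_k:=\rho^{-k\alpha}b_k,
\]
and since $b_k\to Du(x_0)$ (generically nonzero), $|\xi_k|\to\infty$. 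The equation for $v_k$ is therefore \emph{not} of type \eqref{main}; that class is not closed under your rescaling. The approximation lemma must be proved for the shifted family, uniformly over all $\xi\in\mathbb{R}^n$, and this in turn demands a $\xi$-independent local H\"older estimate to secure compactness. The paper obtains the latter via an Ishii--Lions argument (splitting into $|\xi|$ large, yielding Lipschitz, and $|\xi|$ bounded, yielding $C^{0,\beta}$), and in the compactness step shows that every subsequential limit $\overline{u}$ solves $\overline{F}(D^2\overline{u})=0$: the double-phase prefactor disappears in the limit, either because $|\xi_j|\to\infty$, or, when $\xi_j\to\overline{\xi}$, through a perturbed test-function argument at points where the touching polynomial satisfies $D\varphi+\overline{\xi}=0$. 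Your plan to land on the frozen double-phase equation and invoke \cite{DeF21} is therefore not what actually occurs; the tangential equation is simply $F$-harmonic, which is exactly why only the Krylov--Safonov exponent $\overline{\alpha}$ enters the threshold for $\alpha$.

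Two smaller issues. First, $F$ is not assumed positively $1$-homogeneous under $(A_1)$, so you cannot invoke homogeneity in the rescaling; one only uses that $F_k(M):=\rho^{k(1-\alpha)}F(\rho^{k(\alpha-1)}M)$ is again $(\lambda,\Lambda)$-elliptic. Second, the paper never needs smallness of the oscillations of $p(\cdot)$, $q(\cdot)$, $a(\cdot)$: the approximation lemma requires only $\|f\|_{L^\infty}\le\delta$, precisely because the limiting problem is $\overline{F}(D^2h)=0$ irrespective of those coefficients.
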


This paper is organized as follows. In Section \ref{sec1}, we first collect some basic notations, notions and present the assumptions on equation \eqref{main}, then explain how to simplify the problem to a smallness regime. Sections \ref{sec2} and \ref{sec3} are devoted to showing the $C^{1,\alpha}$-regularity properties of solutions to \eqref{main}.

\section{Preliminaries}
\label{sec1}

In this section, we give some hypotheses on equation \eqref{main} together with definitions of Pucci operators and viscosity solutions. Additionally, we will explain how to reduce this problem to a smallness regime.

\subsection{Notions and assumptions}
Now we give the main assumptions  as follows:

\smallskip

\begin{itemize}
\item [($A_1$)] The fully nonlinear operator $F: Sym(n)\rightarrow \mathbb{R}$ is continuous and uniformly $(\lambda,\Lambda)$-elliptic in the sense that
    $$
    \lambda\|N\|\leq F(M+N)-F(M)\leq \Lambda\|N\|
    $$
    for some $0<\lambda\leq \Lambda$ and each $M,N\in Sym(n)$ with $N\geq0$. Here $Sym(n)$ stands for the set of all $n\times n$ real symmetric matrices. Let us suppose $F(0)=0$ for convenience.

\smallskip

\item [($A_2$)] The modulating coefficient $a(x)$ satisfies that $0\leq a(\cdot)\in C(\Omega)$.

\smallskip

\item [($A_3$)] We assume that these two variable exponents $p(x),q(x)\in C(\Omega)$ such that $0\leq p(x)\leq q(x)< \infty$ for all $x\in \Omega$.

\smallskip

\item [($A_4$)] The source term $f$ belongs to $C(\Omega)\cap L^\infty(\Omega)$.
\end{itemize}

The Pucci extremal operators $P^{\pm}_{\lambda,\Lambda}:Sym(n)\rightarrow\mathbb{R}$ are defined as
$$
P^+_{\lambda,\Lambda}(M):=\Lambda\sum_{\overline{\lambda}_i>0}\overline{\lambda}_i+\lambda\sum_{\overline{\lambda}_i<0}\overline{\lambda}_i
$$
and
$$
P^-_{\lambda,\Lambda}(M):=\lambda\sum_{\overline{\lambda}_i>0}\overline{\lambda}_i+\Lambda\sum_{\overline{\lambda}_i<0}\overline{\lambda}_i,
$$
where $\{\overline{\lambda}_i\}^n_1$ are the eigenvalues of matrix $M$. With the Pucci operators in hand, the $(\lambda,\Lambda)$-ellipticity of nonlinear operator $F$ can be reformulated as
$$
P^-_{\lambda,\Lambda}(N)\leq F(M+N)-F(M)\leq  P^+_{\lambda,\Lambda}(N)
$$
for any $M,N\in Sym(n)$.

\begin{definition}
A function $u\in C(\Omega)$ is called a viscosity supersolution to \eqref{main}, if for all $x_0\in \Omega$ and $\varphi(x)\in C^2(\Omega)$ such that $u-\varphi$ attains a local minimum at $x_0$, one has
$$
\big[|D\varphi(x_0)|^{p(x_0)}+a(x_0)|D\varphi(x_0)|^{q(x_0)}\big]F(D^2\varphi(x_0))\leq f(x_0).
$$
A function $u\in C(\Omega)$ is a viscosity subsolution if for all $x_0\in \Omega$ and $\varphi(x)\in C^2(\Omega)$ such that $u-\varphi$ attains a local maximum at $x_0$, one has
$$
\big[|D\varphi(x_0)|^{p(x_0)}+a(x_0)|D\varphi(x_0)|^{q(x_0)}\big]F(D^2\varphi(x_0))\geq f(x_0).
$$
We say that $u$ is a viscosity solution to \eqref{main} if it is viscosity super- and subsolution simultaneously.
\end{definition}

\subsection{Smallness regime}

In this part, we will make use of the scaling properties of \eqref{main} to trace the problem back to a smallness regime. Namely, without loss of generality, it is possible to assume that
\begin{equation}
\label{1-1}
\|u\|_{L^\infty(B_1)}\leq 1 \quad\text{and} \quad \|f\|_{L^\infty(B_1)}\leq \varepsilon
\end{equation}
with $0<\varepsilon\ll1$. We call $u$ a normalized viscosity solution to equation \eqref{main} if hypothesis \eqref{1-1} holds. In what follows, we check its scaling features that permit us to work under assumption \eqref{1-1}. Assume that $u$ is a viscosity solution to \eqref{main}. We define $w:B_1\rightarrow \mathbb{R}$ as
$$
w(x):=\frac{u(x_0+\tau x)}{K}
$$
where $x_0\in \Omega$ and $\tau,K$ are two positive constants to be fixed later. We can check readily that $w$ solves in the viscosity sense
$$
\left[|Dw|^{\widetilde{p}(x)}+\widetilde{a}(x)|Dw|^{\widetilde{q}(x)}\right]\widetilde{F}(D^2w)=\widetilde{f}(x) \quad\text{in }  B_1.
$$
Here the notations $\widetilde{p}(x),\widetilde{q}(x),\widetilde{a}(x),\widetilde{F}(M)$ and $\widetilde{f}(x)$ are defined by
\begin{align*}
&\widetilde{p}(x)=p(x_0+\tau x),\quad \widetilde{q}(x)=q(x_0+\tau x), \\
&\widetilde{a}(x)=\left(\frac{K}{\tau}\right)^{\widetilde{q}(x)-\widetilde{p}(x)}a(x_0+\tau x), \\
&\widetilde{F}(M)=\frac{\tau^2}{K}F\left(\frac{K}{\tau^2}M\right), \\
&
\widetilde{f}(x)=\frac{\tau^{\widetilde{p}(x)+2}}{K^{\widetilde{p}(x)+1}}f(x_0+\tau x).
\end{align*}
Now set
$$
\tau:=\varepsilon^\frac{1}{2+\inf_\Omega p(x)}
$$
with $0<\varepsilon<1$ small enough, and
$$
K:=2\left(1+\|u\|_{L^\infty(\Omega)}+\|f\|^\frac{1}{1+\inf_\Omega p(x)}_{L^\infty(\Omega)}\right).
$$
It is easy to verify that the nonlinear operator $\widetilde{F}(\cdot)$ is uniformly $(\lambda,\Lambda)$-elliptic as well. By the previous definition, there holds that $0\leq\widetilde{p}(x),\widetilde{q}(x),\widetilde{a}(x)\in C(\Omega)$ and $\widetilde{p}(x)\leq\widetilde{q}(x)$. In addition, we can find that
$$
\|w\|_{L^\infty(\Omega)}\leq1 \quad\text{and}\quad \|\widetilde{f}\|_{L^\infty(\Omega)}\leq \varepsilon.
$$
Therefore, $w$ solves an equation possessing the same structure as \eqref{main} and then $w$ is in the smallness regime.

\section{$C^{1,\alpha}$-regularity of solutions}
\label{sec2}

We shall prove in this section that the viscosity solutions to \eqref{main} are locally of class $C^{1,\alpha}(\Omega)$. Now we first provide the local H\"{o}lder continuity of viscosity solutions to the following problem
\begin{equation}
\label{2-1}
\big[|Du+\xi|^{p(x)}+a(x)|Du+\xi|^{q(x)}\big]F(D^2u)=f(x)  \quad \text{in }  B_1,
\end{equation}
where $\xi$ is an arbitrary vector in $\mathbb{R}^n$. Its proof is postponed to the next section.

\begin{proposition}
\label{pro2-3}
Assume that $u\in C(B_1)$ is a normalized viscosity solution to equation \eqref{2-1}. Then we conclude that $u\in C^{0,\beta}_{\rm loc}(B_1)$ for some $\beta\in(0,1)$ and moreover it holds that
$$
[u]_{0,\beta;B_r}\leq C\left(n,\lambda,\Lambda,r,\sup_{B_1}p(x)\right)
$$
for any ball $B_r\subset\subset B_1$.
\end{proposition}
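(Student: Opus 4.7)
The plan is to establish Proposition~\ref{pro2-3} through the Ishii--Lions doubling-of-variables method, which is the classical route to H\"older continuity for viscosity solutions of gradient-degenerate fully nonlinear equations. Fix $r\in(0,1)$ with $\overline{B_r}\subset B_1$ and a base point $x_0\in B_{r/2}$; it suffices to produce constants $L$ and $\beta\in(0,1)$ depending only on the parameters listed in the statement such that $|u(x)-u(x_0)|\le L|x-x_0|^\beta$ for all $x\in B_r$. To this end I would analyze the auxiliary function
\begin{equation*}
\Psi(x,y):=u(x)-u(y)-L|x-y|^\beta-M\bigl(|x-x_0|^2+|y-x_0|^2\bigr),\quad (x,y)\in B_r\times B_r,
\end{equation*}
arguing by contradiction under the assumption $\sup\Psi>0$. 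For $M$ large enough relative to $\|u\|_{L^\infty(B_1)}=1$ and $r$, any maximizer $(\bar x,\bar y)$ must lie in the interior, with $\bar x,\bar y$ close to $x_0$, and necessarily $\bar x\neq\bar y$.

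At the maximum the Crandall--Ishii lemma furnishes jets $(p_x,X)\in\overline{J}^{2,+}u(\bar x)$ and $(p_y,Y)\in\overline{J}^{2,-}u(\bar y)$ with
\begin{equation*}
p_x=L\beta|\bar x-\bar y|^{\beta-1}e+2M(\bar x-x_0),\qquad p_y=L\beta|\bar x-\bar y|^{\beta-1}e-2M(\bar y-x_0),
\end{equation*}
where $e=(\bar x-\bar y)/|\bar x-\bar y|$, together with a matrix inequality which, via the strict concavity of $t\mapsto t^\beta$, produces
\begin{equation*}
P^+_{\lambda,\Lambda}(X-Y)\le -c_0 L|\bar x-\bar y|^{\beta-2}
\end{equation*}
for some $c_0=c_0(n,\lambda,\Lambda,\beta)>0$. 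This is a standard output of the method, which I would quote rather than re-derive.

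The core of the argument is to feed this matrix estimate into the viscosity inequalities while taming the $\xi$-shifted degeneracy. Choosing $L\beta\ge 8(|\xi|+Mr)$ and recalling $|\bar x-\bar y|\le 2r\le 2$ yields $|p_x+\xi|,|p_y+\xi|\ge \tfrac12 L\beta|\bar x-\bar y|^{\beta-1}\ge 1$, so that using only the $p(x)$-summand and $a\ge 0$ one obtains
\begin{equation*}
H(\bar x,p_x+\xi),\; H(\bar y,p_y+\xi)\;\ge\;\bigl(\tfrac12 L\beta|\bar x-\bar y|^{\beta-1}\bigr)^{\inf_{B_1}p}.
\end{equation*}
The viscosity sub/supersolution inequalities then give $F(X)\ge f(\bar x)/H(\bar x,p_x+\xi)$ and $F(Y)\le f(\bar y)/H(\bar y,p_y+\xi)$, and combining with the ellipticity bound $F(X)-F(Y)\le P^+_{\lambda,\Lambda}(X-Y)$ leads to
\begin{equation*}
c_0 L|\bar x-\bar y|^{\beta-2}\;\le\;2\|f\|_{L^\infty(B_1)}\bigl(\tfrac12 L\beta|\bar x-\bar y|^{\beta-1}\bigr)^{-\inf_{B_1}p}.
\end{equation*}
Since the exponent of $|\bar x-\bar y|$ on the left is strictly more negative than on the right, choosing $L$ large enough in terms of the admissible parameters forces $|\bar x-\bar y|$ to exceed $\mathrm{diam}(B_r)$, which is impossible. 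This contradicts $\sup\Psi>0$ and yields the pointwise H\"older estimate at $x_0$.

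The principal obstacle is the coupling between the arbitrary shift $\xi$ and the degenerate weight $H$: $L$ has to be picked first large enough to dominate $|\xi|$ in the effective gradient, and only then can the quantitative lower bound on $H$ be extracted to beat the matrix side. A secondary, purely technical point is that $|x-y|^\beta$ fails to be $C^2$ on the diagonal, which is circumvented either by the penalization $|x-y|^\beta-\delta|x-y|^4$ or by applying the Crandall--Ishii lemma directly in its semijet form. Since the whole argument is uniform in $x_0\in B_{r/2}$, one extracts precisely the seminorm bound claimed in the statement.
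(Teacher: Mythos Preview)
Your doubling-of-variables scheme is the right framework and coincides with the paper's approach in Lemmas~\ref{lem2-1} and~\ref{lem2-2}, but there is a genuine gap: the constant you produce depends on $|\xi|$. You require $L\beta\ge 8(|\xi|+Mr)$ in order to force $|p_x+\xi|,|p_y+\xi|\ge \tfrac12 L\beta|\bar x-\bar y|^{\beta-1}$, so your H\"older seminorm bound grows with $|\xi|$. The proposition, however, asserts a bound of the form $C(n,\lambda,\Lambda,r,\sup_{B_1}p)$ with \emph{no} dependence on $\xi$, and this uniformity is exactly what is exploited downstream: in the compactness step of Lemma~\ref{lem2-4} one has a sequence of equations with vectors $\xi_j$ that may well be unbounded, and equicontinuity of $\{u_j\}$ must survive $|\xi_j|\to\infty$.

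The paper closes this gap by a dichotomy that you are missing. One first fixes universal constants $M_1,M_2$ (hence $M_0:=4(M_1+M_2)$) coming only from the admissible data. If $|\xi|>M_0$, one divides the two viscosity inequalities by $|\xi|^{p(\hat x)}$ and $|\xi|^{p(\hat y)}$ respectively; then $|\xi|^{-1}\hat\xi_i$ has modulus at most $\tfrac12$ and $||\xi|^{-1}\hat\xi_i+e|\ge\tfrac12$ uniformly in $\xi$, while the rescaled right-hand sides $|\xi|^{-p(\cdot)}f(\cdot)$ stay bounded by $\varepsilon$. This yields a Lipschitz bound independent of $\xi$ (Lemma~\ref{lem2-1}). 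If instead $|\xi|\le M_0$, the lower bound $|\hat\xi_i+\xi|\ge M_1\beta|\hat x-\hat y|^{\beta-1}-2M_2-M_0$ already involves only universal quantities, and your type of argument goes through with $L=M_1$ fixed once and for all (Lemma~\ref{lem2-2}). Without this split---or some equivalent device that decouples the choice of $L$ from $|\xi|$---your proof does not deliver the statement as written.
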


The H\"{o}lder estimate on viscosity solution provides the compactness with respect to the uniform convergence, which is the key ingredient of the following approximation result. We now recall a nice regularity result on solutions to the homogeneous problem
$$
F(D^2u)=0 \quad\text{in }  B_1
$$
with the fully nonlinear operator $F(\cdot)$ satisfying the condition $(A_1)$ above, which states that such solution is locally $C^{1,\overline{\alpha}}$ regular for $\overline{\alpha}\in (0,1)$ depending only on $n,\lambda,\Lambda$. Furthermore, there holds that, for a constant $C$ depending on $n,\lambda,\Lambda$,
$$
\|u\|_{C^{1,\overline{\alpha}}(B_{1/2})}\leq C\|u\|_{L^\infty(B_1)}.
$$
We refer to (\cite{CC95}, Chapter 5)  for more details.


\begin{lemma}
\label{lem2-4}
Suppose that $u\in C(B_1)$ is a normalized viscosity solution to \eqref{2-1} in $B_1$. For any $\varepsilon>0$, there is $\delta>0$ that depends on $n,\lambda,\Lambda,\varepsilon$ and $\sup_{B_1}p(x)$, such that if $\|f\|_{L^\infty(B_1)}\leq \delta$, then
$$
\|u-v\|_{L^\infty(B_{1/2})}\leq \varepsilon
$$
for some $C^{1,\overline{\alpha}}(B_{3/4})$-regular function $v(x)$ with $\overline{\alpha}\in(0,1)$. Furthermore, $\|v\|_{C^{1,\overline{\alpha}}(B_{3/4})}\leq C$ with $C$ depending only on $n,\lambda,\Lambda$.
\end{lemma}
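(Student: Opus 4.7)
The plan is to run a classical compactness-contradiction argument of geometric tangential flavor. I would assume toward contradiction that the conclusion fails: for some $\varepsilon_0>0$ and a fixed upper bound $P$ on $\sup_{B_1}p(\cdot)$, there are sequences of data $(F_k,p_k,q_k,a_k,\xi_k,f_k)$ satisfying $(A_1)$--$(A_4)$ uniformly (same $\lambda,\Lambda$, and $\sup_{B_1}p_k\leq P$) and normalized viscosity solutions $u_k$ to the associated equation \eqref{2-1} with $\|f_k\|_{L^\infty(B_1)}\leq 1/k$, such that
$$
\|u_k-v\|_{L^\infty(B_{1/2})}>\varepsilon_0
$$
for every $v$ with $\|v\|_{C^{1,\overline{\alpha}}(B_{3/4})}\leq C_*$, where $C_*=C_*(n,\lambda,\Lambda)$ is the constant in the classical $C^{1,\overline{\alpha}}$ estimate cited just above the lemma.

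First I would apply Proposition \ref{pro2-3} to obtain a uniform bound for $\{u_k\}$ in $C^{0,\beta}(B_{3/4})$, so by Arzel\`a--Ascoli a subsequence converges $u_k\to u_\infty$ uniformly in $B_{3/4}$, with $\|u_\infty\|_{L^\infty}\leq 1$. Uniform $(\lambda,\Lambda)$-ellipticity renders $\{F_k\}$ equi-Lipschitz on $Sym(n)$, so a further subsequence gives $F_k\to F_\infty$ locally uniformly, with $F_\infty$ still $(\lambda,\Lambda)$-elliptic. For the vectors $\xi_k$ I would split into the subcase $\xi_k\to\xi_\infty\in\mathbb{R}^n$ (along a subsequence) and the subcase $|\xi_k|\to\infty$.

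The crux is to show that $u_\infty$ is a viscosity solution of $F_\infty(D^2u_\infty)=0$ in $B_{3/4}$. Let $\varphi\in C^2$ strictly touch $u_\infty$ from above at $x_0\in B_{3/4}$; the standard stability lemma produces touching points $x_k\to x_0$ for $u_k-\varphi$ and the viscosity inequality
$$
\bigl[|D\varphi(x_k)+\xi_k|^{p_k(x_k)}+a_k(x_k)|D\varphi(x_k)+\xi_k|^{q_k(x_k)}\bigr]F_k(D^2\varphi(x_k))\geq f_k(x_k).
$$
Under the non-degeneracy assumption $D\varphi(x_0)+\xi_\infty\neq 0$ (automatic if $|\xi_k|\to\infty$), the bracket is bounded below by a positive constant independent of $k$, using the uniform bound $p_k\leq P$ to control $|z|^{p_k(x_k)}$ at fixed nonzero $|z|$. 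Dividing through and exploiting $\|f_k\|_{L^\infty}\to 0$ allows passage to the limit to obtain $F_\infty(D^2\varphi(x_0))\geq 0$. The supersolution inequality is symmetric, so $u_\infty$ satisfies the viscosity inequalities for all test functions with non-vanishing shifted gradient; the standard equivalence in the degenerate fully nonlinear theory (in the spirit of \cite{BD04,IS13}) then upgrades this to $u_\infty$ being a viscosity solution of the non-degenerate equation $F_\infty(D^2u_\infty)=0$.

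By the Caffarelli--Cabr\'e theorem (\cite{CC95}) recalled before the lemma, $u_\infty\in C^{1,\overline{\alpha}}(B_{3/4})$ with $\|u_\infty\|_{C^{1,\overline{\alpha}}(B_{3/4})}\leq C_*$. Choosing $v=u_\infty$ and using $u_k\to u_\infty$ uniformly in $B_{1/2}$ contradicts the initial assumption for $k$ large. The principal obstacle is the passage to the limit in the degenerate coefficient $|Du+\xi|^{p(x)}+a(x)|Du+\xi|^{q(x)}$ when $Du+\xi$ vanishes: the bracket may then collapse to zero and erase the viscosity inequality in the limit, which is precisely why one must restrict the test to non-degenerate shifted gradients and then invoke the degenerate-theory upgrade. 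The other pivotal ingredient is Proposition \ref{pro2-3}, which supplies the compactness that drives the whole scheme.
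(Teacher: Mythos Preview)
Your compactness--contradiction scheme is exactly the paper's approach: extract limits $u_k\to\overline{u}$ and $F_k\to\overline{F}$ via Proposition~\ref{pro2-3} and uniform ellipticity, show $\overline{F}(D^2\overline{u})=0$ in the viscosity sense, invoke the Caffarelli--Cabr\'e estimate, and derive a contradiction. The non-degenerate case $D\varphi(x_0)+\xi_\infty\neq 0$ (and the unbounded $|\xi_k|$ case) is handled identically in both.

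The one soft spot is your treatment of the degenerate case $D\varphi(x_0)+\xi_\infty=0$. You write that ``the standard equivalence in the degenerate fully nonlinear theory (in the spirit of \cite{BD04,IS13})'' upgrades the restricted viscosity inequalities to full ones. This is not quite a clean citation: the equivalences in those references concern different formulations of the \emph{degenerate} equation $|Du|^pF(D^2u)=f$, not the statement ``viscosity inequalities for $F(D^2u)=0$ tested only against $\varphi$ with $D\varphi(x_0)\neq-\xi_\infty$ imply the full viscosity property.'' The latter requires an argument, and that argument is precisely the bulk of the paper's proof. Concretely, assuming by contradiction $\overline{F}(M)>0$ for the quadratic test $M=D^2\varphi$, the paper uses that $M$ then has a positive eigenspace $T$, perturbs the test function to $\phi=\varphi+\gamma|P_T(x)|$, and shows (through several subcases on whether $P_T(x_j^\gamma)=0$ and whether $M\overline{x}=0$) that the bracket $|D\phi+\xi_j|^{p_j}+a_j|D\phi+\xi_j|^{q_j}$ stays bounded below by a positive power of $\gamma$ or of $|M\overline{x}|$, uniformly in $j$; the bound $p_j\leq\sup_{B_1}p$ is what makes this uniform and is the reason that parameter enters $\delta$. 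This is the Imbert--Silvestre perturbation technique adapted to the variable-exponent double-phase degeneracy, and it is not literally covered by the cited references without this adaptation. So your outline is correct, but the step you defer is exactly the technical heart of the lemma; the paper spells it out rather than citing it away.
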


\begin{proof}
We argue by contradiction. If this claim fails, then there are $\varepsilon_0>0$ and sequences of functions $\{a_j\},\{p_j\},\{q_j\},\{f_j\},\{u_j\},\{F_j\}$ and sequence of vectors $\{\xi_j\}$ satisfying separately that

\begin{itemize}
  \item[(i)] $F_j(\cdot)$ is uniformly $(\lambda,\Lambda)$-elliptic;

    \smallskip

  \item[(ii)] $\|f_j\|_{L^\infty(B_1)}\leq \frac{1}{j}$ and $f_j\in C(B_1)$;

    \smallskip

  \item[(iii)] $0\leq p_j(x)\leq q_j(x)$, $p_j(x)\leq \sup_{x\in B_1}p(x)$ and $p_j(\cdot),q_j(\cdot)\in C(B_1)$;

    \smallskip

  \item[(iv)] $a_j(\cdot)\in C(B_1)$ and $a_j(\cdot)\geq0$ in $B_1$;

    \smallskip

  \item[(v)] $u_j\in C(B_1)$ and $\|u_j\|_{L^\infty(B_1)}\leq 1$.
\end{itemize}
Furthermore, there holds that
\begin{equation}
\label{2-12}
\big[|Du_j+\xi_j|^{p_j(x)}+a_j(x)|Du_j+\xi_j|^{q_j(x)}\big]F_j(D^2u_j)=f_j(x)  \quad \text{in } B_1.
\end{equation}
Nonetheless, for any $v(x)\in C^{1,\overline{\alpha}}(B_{3/4})$,
$$
\|u_j-v\|_{L^\infty(B_{1/2})}>\varepsilon_0.
$$

Notice that the condition (i) renders that $F_j(\cdot)$ converges uniformly to some $(\lambda,\Lambda)$-elliptic operator $\overline{F}(\cdot)\in C(Sym(n),\mathbb{R})$. Moreover, by means of condition (v) and the H\"{o}lder continuity (Proposition \ref{pro2-3}) of $u_j$, it follows that $u_j$ converges locally uniformly to some function $\overline{u}$ in $B_1$. Particularly, it holds that
$$
\overline{u}\in C(B_{3/4}) \text{ and }\|\overline{u}\|_{L^\infty(B_{3/4})}\leq1,
$$
but
\begin{equation}
\label{2-13}
\sup_{x\in B_{1/2}}|\overline{u}(x)-v(x)|>\varepsilon_0.
\end{equation}

Next, we aim at showing that $\overline{u}$ solves in the viscosity sense
\begin{equation}
\label{2-14}
\overline{F}(D^2h)=0   \quad \text{in }  B_{3/4}.
\end{equation}
To this end, we only verify that $\overline{u}$ is a viscosity supersolution, because we can prove that it is a subsolution in a similar way. We first suppose that $\varphi(x)\in C^2(B_{3/4})$ touches $\overline{u}(x)$ from below at $\overline{x}$, that is,
\begin{equation*}
\begin{cases}
\varphi(\overline{x})=\overline{u}(\overline{x}),\\
\varphi(x)<\overline{u}(x) \quad
\textmd{for } x\neq \overline{x}.
\end{cases}
\end{equation*}
Without loss of generality, we assume that $|\overline{x}|=\overline{u}(0)=0$ and $\varphi(x)$ is a quadratic polynomial, that is,
$$
\varphi(x)=\frac{1}{2}\langle Mx,x\rangle+\langle b,x\rangle.
$$
Here $\langle\cdot,\cdot\rangle$ denotes the inner product. Since $u_j\rightarrow\overline{u}$ locally uniformly in $B_1$, we find that
$$
\varphi_j(x):=\frac{1}{2}\langle M(x-x_j),(x-x_j)\rangle+\langle b,x-x_j\rangle+u_j(x_j)
$$
touches $u_j$ from below at $x_j$ lying in a small neighbourhood of origin. Due to $u_j$ a viscosity solution, we have
$$
\big[|b+\xi_j|^{p_j(x_j)}+a_j(x_j)|b+\xi_j|^{q_j(x_j)}\big]F_j(M)\leq f_j(x_j).
$$
First of all, if $\{\xi_j\}$ is unbounded, we can assume $|\xi_j|\rightarrow\infty$ (up to a subsequence), which implies that
$$
\overline{F}(M)\leq 0.
$$

In the second case, if $\{\xi_j\}$ is bounded, we may assume $\xi_j\rightarrow\overline{\xi}$ (up to a subsequence). For the case $|b+\overline{\xi}|\neq0$, it is easy to infer that $\overline{F}(M)\leq 0$ as well. Here we observe that $\{p_j(x_j)\}$ is a bounded sequence. Now let us concentrate on the case $|b+\xi|=0$ (There are two possibilities that $|b|=|\xi|=0$ or $b=-\xi$ with $|b|,|\xi|>0$). We are going to justify $\overline{F}(M)\leq 0$ in this scenario. We suppose $\overline{F}(M)>0$ by contradiction. Then we can see from ellipticity condition of $F(\cdot)$ that the matrix $M$ has at least one positive eigenvalue. Let $\mathbb{R}^n=T\oplus Q$ be the orthogonal sum. Here $T={\rm span}(e_1, e_2, \cdots, e_k)$ is the invariant space composed of those eigenvectors corresponding to positive eigenvalues.

\medskip

\textbf{Case 1.} $b=-\xi$ with $|b|,|\xi|>0$. Let $\gamma>0$ and set
$$
\phi(x):=\varphi(x)+\gamma|P_T(x)|=\frac{1}{2}\langle Mx,x\rangle+\langle b,x\rangle+\gamma|P_T(x)|,
$$
where $P_T$ stands for the orthogonal projection over $T$. Because of $u_j\rightarrow \overline{u}$ locally uniformly in $B_1$ and $\varphi(x)$ touching $\overline{u}(x)$ from below at the origin, then, for $\gamma$ small enough, $\phi(x)$ touches $u_j(x)$ from below at a point $x^\gamma_j\in B_r$ ($B_r$ is a small neighbourhood of the origin). In addition, there holds that, up to a subsequence, $x^\gamma_j\rightarrow\overline{x}$, for some $\overline{x}\in B_{3/4}$, as $j\rightarrow\infty$.

First, when $P_T(x^\gamma_j)=0$, we assert $\overline{F}(M)\leq 0$, which contradicts the previous assumption. To this aim, we note that
$$
\overline{\phi}(x):=\varphi(x)+\gamma e\cdot P_T(x)
$$
touches $u_j$ from below at $x^\gamma_j$ for every $e\in \mathbb{S}^{n-1}$ (that is, $|e|=1$). We can readily derive
$$
D\overline{\phi}(x^\gamma_j)=Mx^\gamma_j+b+\gamma P_T(e), \quad D^2\overline{\phi}(x^\gamma_j)=M.
$$
We choose $e\in T\cap \mathbb{S}^{n-1}$ so that $P_T(e)=e$. It follows from $u_j$ being viscosity solution that
\begin{equation}
\label{2-15}
\big[|\xi_j+Mx^\gamma_j+b+\gamma e|^{p_j(x^\gamma_j)}+a_j(x^\gamma_j)|\xi_j+Mx^\gamma_j+b+\gamma e|^{q_j(x^\gamma_j)}\big]F_j(M)\leq f_j(x_j).
\end{equation}
If $M\overline{x}=0$, then for $j$ sufficiently large, we arrive at
$$
|\xi_j+Mx^\gamma_j+b|\leq \frac{\gamma}{2}
$$
so that
$$
|\xi_j+Mx^\gamma_j+b+\gamma e|\geq\frac{\gamma}{2}.
$$
Furthermore, \eqref{2-15} becomes
\begin{align*}
F_j(M)&\leq\frac{f_j(x^\gamma_j)}{|\xi_j+Mx^\gamma_j+b+\gamma e|^{p_j(x^\gamma_j)}+a_j(x^\gamma_j)|\xi_j+Mx^\gamma_j+b+\gamma e|^{q_j(x^\gamma_j)}}\\
&\leq\frac{f_j(x^\gamma_j)}{|\xi_j+Mx^\gamma_j+b+\gamma e|^{p_j(x^\gamma_j)}}\\
&\leq\left(\frac{2}{\gamma}\right)^{p_j(x^\gamma_j)}f_j(x^\gamma_j),
\end{align*}
where $\{p_j(x^\gamma_j)\}_j$ is a bounded sequence. Thus we obtain $\overline{F}(M)\leq 0$ as $j\rightarrow\infty$. When $|M\overline{x}|>0$, it yields that, for $j$ large enough,
$$
|Mx^\gamma_j|>\frac{1}{2}|M\overline{x}|
$$
and
$$
|\xi_j+b|<\frac{1}{8}|M\overline{x}|.
$$
Moreover, we pick $\gamma<\frac{1}{8}|M\overline{x}|$. Then we could find
$$
|\xi_j+Mx^\gamma_j+b+\gamma e|>\frac{1}{4}|M\overline{x}|.
$$
Thereby, \eqref{2-15} turns into
$$
F_j(M)\leq\frac{f_j(x^\gamma_j)}{(4^{-1}|M\overline{x}|)^{p_j(x^\gamma_j)}}.
$$
By sending $j\rightarrow\infty$, we again have $\overline{F}(M)\leq0$. Next, we proceed with the scenario $P_T(x^\gamma_j)\neq0$. Notice that $|P_T(x)|$ is smooth and convex in a small neighbourhood of $x^\gamma_j$. Denote
$$
\zeta^\gamma_j=\frac{P_T(x^\gamma_j)}{|P_T(x^\gamma_j)|}.
$$
There holds that
$$
D(|P_T(x)|)|_{x^\gamma_j}=\zeta^\gamma_j \quad\text{and} \quad D^2(|P_T(x)|)|_{x^\gamma_j}=|P_T(x^\gamma_j)|^{-1}(I-\zeta^\gamma_j\otimes\zeta^\gamma_j).
$$
Hence through $u_j$ being a viscosity supersolution, we derive the following viscosity inequality
\begin{align*}
f_j(x^\gamma_j)\geq&
\big[|Mx^\gamma_j+b+\gamma \zeta^\gamma_j+\xi_j|^{p_j(x^\gamma_j)}+a_j(x^\gamma_j)|Mx^\gamma_j+b+\gamma \zeta^\gamma_j+\xi_j|^{q_j(x^\gamma_j)}\big]\\
&\cdot F_j(M+\gamma|P_T(x^\gamma_j)|^{-1}(I-\zeta^\gamma_j\otimes\zeta^\gamma_j)).
\end{align*}
Here we observe that $|\zeta^\gamma_j|=1$. Let
$$
e:=\zeta^\gamma_j=\frac{P_T(x^\gamma_j)}{|P_T(x^\gamma_j)|}.
$$
We could easily perform the same procedure as in the case $P_T(x^\gamma_j)=0$, via distinguishing $M\overline{x}=0$ and $M\overline{x}\neq0$. Therefore, under the condition that $b=-\xi\neq0$, we reach $\overline{F}(M)\leq0$, which contradicts the assumption $\overline{F}(M)>0$.

\medskip

\textbf{Case 2.} $b=\xi=0$. In this case, the proceedings become easier. Because $\frac{1}{2}\langle Mx,x\rangle$ touches $\overline{u}(x)$ from below at the origin and $u_j\rightarrow\overline{u}$ locally uniformly, the test function
$$
\hat{\phi}(x):=\frac{1}{2}\langle Mx,x\rangle+\gamma|P_T(x)|
$$
touches $u_j$ from below at a point $\hat{x}_j\in B_r$ for sufficiently small $\gamma>0$. Also, the sequence $\{\hat{x}_j\}$ is uniformly bounded. Likewise, we examine these two scenarios that $|P_T(\hat{x}_j)|=0$ and $|P_T(\hat{x}_j)|>0$.  The remaining work is to evaluate the boundedness on $|M\hat{x}_j+\xi_j+\gamma e|$ and $|M\hat{x}_j+\xi_j+\gamma \hat{e}|$ with $\hat{e}:=\frac{P_T(\hat{x}_j)}{|P_T(\hat{x}_j)|}$ ($P_T(\hat{x}_j)\neq0$), which is analogous to the Case 1. Eventually, we shall infer that $\overline{F}(M)\leq0$.

As has been stated above, we proves that $\overline{u}$ is a viscosity supersolution to \eqref{2-14}. In order to verify that $\overline{u}$ is a viscosity subsolution of \eqref{2-14}, it suffices to show $-\overline{u}$ is a supersolution to $\hat{F}(D^2w)=0$, where $\hat{F}(M):=-\overline{F}(-M)$ is uniformly $(\lambda,\Lambda)$-elliptic as well. Hence owing to $\overline{u}$ being a viscosity solution to \eqref{2-14}, it follows from the well-known regularity results in (\cite{CC95}, Chapter 5) that $\overline{u}\in C^{1,\overline{\alpha}}_{\rm loc}(B_{3/4})$ with some $\overline{\alpha}\in (0,1)$ and moreover $\|\overline{u}\|_{C^{1,\overline{\alpha}}(B_{1/2})}\leq C$ with $C$ depending only on $n,\lambda,\Lambda$. Thus we could choose $v:=\overline{u}$ so that a contradiction with \eqref{2-13} is reached.
\end{proof}

\begin{lemma}
\label{lem2-5}
Suppose that $u$ is a normalized viscosity solution to \eqref{2-1} . Given $\alpha\in(0,\overline{\alpha})$, there are two constants $0<\rho\ll1$ and $\delta>0$, the former depending on $n,\lambda,\Lambda,\alpha$ and the latter also depending on $\sup_{B_1}p(x)$ besides these previous parameters, such that if
$$
\|f\|_{L^\infty(B_1)}\leq \delta,
$$
then there is an affine function $l(x)=a+b\cdot x$ ($a\in \mathbb{R}, b\in\mathbb{R}^n$) fulfilling
$$
\|u-l\|_{L^\infty(B_\rho)}\leq \rho^{1+\alpha}
$$
and
$$
|a|+|b|\leq C,
$$
where $C$ only depends on $n,\lambda,\Lambda$.
\end{lemma}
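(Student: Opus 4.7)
The plan is to combine the approximation result in Lemma \ref{lem2-4} with the interior $C^{1,\overline{\alpha}}$ estimate on the limiting equation, then use the gap $\alpha < \overline{\alpha}$ to absorb the error into $\rho^{1+\alpha}$. Concretely, given $\alpha \in (0,\overline{\alpha})$, I would first choose the radius $\rho$ so that the first-order Taylor polynomial of any $C^{1,\overline{\alpha}}$ function (with uniformly controlled norm) approximates it at scale $\rho$ with error much smaller than $\rho^{1+\alpha}$; then choose $\varepsilon$ small in Lemma \ref{lem2-4} so that the $L^\infty$ distance between $u$ and the harmonic-like approximant is also absorbed.

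In detail, let $C_0 = C_0(n,\lambda,\Lambda)$ be the constant from Lemma \ref{lem2-4}, so that there exists $v \in C^{1,\overline{\alpha}}(B_{3/4})$ with $\|v\|_{C^{1,\overline{\alpha}}(B_{3/4})} \le C_0$ and $\|u-v\|_{L^\infty(B_{1/2})} \le \varepsilon$ once $\|f\|_{L^\infty(B_1)} \le \delta = \delta(\varepsilon)$. Define the affine function
\[
l(x) := v(0) + Dv(0)\cdot x,
\]
so that by the $C^{1,\overline{\alpha}}$ bound,
\[
\|v - l\|_{L^\infty(B_\rho)} \le C_0 \,\rho^{1+\overline{\alpha}} \quad \text{for every } \rho \in (0, 1/2].
\]
Pick $\rho \in (0,1/2)$ so small that $C_0\,\rho^{1+\overline{\alpha}} \le \tfrac{1}{2}\rho^{1+\alpha}$; this is possible precisely because $\alpha < \overline{\alpha}$, and $\rho$ depends only on $n,\lambda,\Lambda,\alpha$. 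Next take $\varepsilon := \tfrac{1}{2}\rho^{1+\alpha}$ and let $\delta$ be the constant produced by Lemma \ref{lem2-4} for this $\varepsilon$; it therefore depends on $n,\lambda,\Lambda,\alpha$ and $\sup_{B_1} p(x)$.

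Then, by the triangle inequality,
\[
\|u - l\|_{L^\infty(B_\rho)} \le \|u-v\|_{L^\infty(B_{1/2})} + \|v-l\|_{L^\infty(B_\rho)} \le \tfrac{1}{2}\rho^{1+\alpha} + \tfrac{1}{2}\rho^{1+\alpha} = \rho^{1+\alpha},
\]
which is the required flatness estimate. The bound $|a| + |b| = |v(0)| + |Dv(0)| \le 2 \|v\|_{C^{1,\overline{\alpha}}(B_{3/4})} \le 2C_0$ follows directly from the control of $v$, giving the universal constant $C$ depending only on $n,\lambda,\Lambda$.

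There is no real obstacle here; the proof is essentially bookkeeping once Lemma \ref{lem2-4} is in hand. The only subtle point is the order in which the parameters must be fixed: $\rho$ must be chosen first, based purely on $n,\lambda,\Lambda,\alpha$ (so that it is independent of the right-hand side), and only then may $\varepsilon$ (and hence $\delta$) be determined. This ordering is important because $\rho$ will later be iterated at all scales in the Campanato-type proof of $C^{1,\alpha}$ regularity, and must remain universal throughout the iteration.
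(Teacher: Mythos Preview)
Your proof is correct and follows essentially the same approach as the paper: invoke Lemma~\ref{lem2-4} to obtain a $C^{1,\overline{\alpha}}$ approximant $v$, take $l$ to be its first-order Taylor polynomial at the origin, choose $\rho$ using the gap $\alpha<\overline{\alpha}$ so that $C_0\rho^{1+\overline{\alpha}}\le\tfrac12\rho^{1+\alpha}$, then set $\varepsilon=\tfrac12\rho^{1+\alpha}$ and apply the triangle inequality. Your remark on the order in which $\rho$ and $\varepsilon$ (hence $\delta$) must be fixed is exactly the point, and matches the paper's argument.
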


\begin{proof}
For $\varepsilon>0$ to be fixed a posteriori, let $v$ be a solution to $F(D^2v)=0$, where $F(\cdot)$ is a uniformly $(\lambda,\Lambda)$-elliptic operator, which is $\varepsilon$-close to $u$ in $L^\infty(B_{1/2})$. From Lemma \ref{lem2-4}, the existence of such function $v(x)$ is ensured, provided that $\delta>0$ is small enough.

By virtue of the $C^{1,\overline{\alpha}}$-regularity of $v$, there exists a constant $C>1$ depending only on $n,\lambda,\Lambda$ such that
$$
\sup_{x\in B_\rho}|v(x)-(v(0)+Dv(0)\cdot x)|\leq C\rho^{1+\overline{\alpha}}
$$
and
$$
|v(0)|+|Dv(0)|\leq C.
$$
Set
$$
l(x)=a+b\cdot x:=v(0)+Dv(0)\cdot x.
$$
It yields that
\begin{align*}
\sup_{x\in B_\rho}|u(x)-l(x)|&\leq \sup_{x\in B_\rho}|u(x)-v(x)|+\sup_{x\in B_\rho}|v(x)-l(x)|<\varepsilon+C\rho^{1+\overline{\alpha}}.
\end{align*}
Since $0<\alpha<\overline{\alpha}$, we can choose $0<\rho\ll1$ so small that
$$
C\rho^{\overline{\alpha}-\alpha}\leq\frac{1}{2}, \quad \textmd{that is, } \rho\leq(2C)^{-\frac{1}{\overline{\alpha}-\alpha}}.
$$
Also, we fix
$$
\varepsilon=\frac{1}{2}\rho^{1+\alpha}\leq\frac{1}{2}(2C)^{-\frac{1+\alpha}{\overline{\alpha}-\alpha}}.
$$
Finally, merging these preceding displays obtains
$$
\|u-l\|_{L^\infty(B_\rho)}\leq \rho^{1+\alpha}.
$$
We complete the proof.
\end{proof}

\begin{lemma}
\label{lem2-6}
Assume that $u$ is a normalized viscosity solution to \eqref{main} in $B_1$. Given $\alpha\in (0,\overline{\alpha})\cap(0,\frac{1}{1+\sup_{B_1}p(x)}]$, there exist $0<\rho<\frac{1}{2}$ and $\delta>0$, both of which are the same as those in Lemma \ref{lem2-5}, such that if
$$
\|f\|_{L^\infty(B_1)}\leq \delta,
$$
then for each $j\in\mathbb{N}$, there is a sequence $\{l_j(x)\}$, where $l_j(x)=a_j+b_j\cdot x$ ($a_j\in\mathbb{R},b_j\in\mathbb{R}^n$), satisfying
$$
\|u-l_j\|_{L^\infty(B_{\rho^j})}\leq \rho^{j(1+\alpha)},
$$
$$
|a_j-a_{j-1}|\leq C\rho^{(j-1)(1+\alpha)}
$$
and
$$
|b_j-b_{j-1}|\leq C\rho^{(j-1)\alpha},
$$
where the constant $C$ depends only upon $n,\lambda,\Lambda$.
\end{lemma}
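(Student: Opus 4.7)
The plan is to prove this by induction on $j$, using Lemma \ref{lem2-5} at each step applied to a suitable rescaled function. The base case is $j=1$: we set $l_0 \equiv 0$ (i.e.\ $a_0 = 0$, $b_0 = 0$), and then Lemma \ref{lem2-5} applied directly to $u$ produces $l_1$ fulfilling $\|u - l_1\|_{L^\infty(B_\rho)} \leq \rho^{1+\alpha}$ and $|a_1| + |b_1| \leq C$, which matches the required bounds $|a_1 - a_0|, |b_1 - b_0| \leq C = C\rho^{0}$ (note that here we apply Lemma \ref{lem2-5} to equation \eqref{2-1} with shift $\xi = 0$).

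For the inductive step, assuming $l_1,\dots,l_j$ have been constructed, I would introduce the rescaled function
$$
v(x) := \frac{u(\rho^j x) - l_j(\rho^j x)}{\rho^{j(1+\alpha)}}, \qquad x \in B_1,
$$
which by the inductive hypothesis satisfies $\|v\|_{L^\infty(B_1)} \leq 1$. A direct computation of derivatives shows that $v$ solves, in the viscosity sense,
$$
\bigl[|Dv + \xi_j|^{\tilde p_j(x)} + \tilde a_j(x)\,|Dv + \xi_j|^{\tilde q_j(x)}\bigr]\,\tilde F_j(D^2 v) = \tilde f_j(x) \quad \text{in } B_1,
$$
where $\xi_j := \rho^{-j\alpha} b_j$, $\tilde p_j(x) := p(\rho^j x)$, $\tilde q_j(x) := q(\rho^j x)$, $\tilde a_j(x) := \rho^{j\alpha(\tilde q_j(x) - \tilde p_j(x))} a(\rho^j x)$, $\tilde F_j(M) := \rho^{j(1-\alpha)} F\bigl(\rho^{j(\alpha - 1)} M\bigr)$, and
$$
\tilde f_j(x) := \rho^{j[1 - \alpha(1 + \tilde p_j(x))]} f(\rho^j x).
$$
The operator $\tilde F_j$ is still uniformly $(\lambda,\Lambda)$-elliptic and $\tilde a_j, \tilde p_j, \tilde q_j$ verify the structural hypotheses. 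Applying Lemma \ref{lem2-5} to $v$ yields an affine map $\tilde l(x) = \tilde a + \tilde b \cdot x$ with $\|v - \tilde l\|_{L^\infty(B_\rho)} \leq \rho^{1+\alpha}$ and $|\tilde a| + |\tilde b| \leq C$. Setting
$$
l_{j+1}(x) := l_j(x) + \rho^{j(1+\alpha)}\,\tilde l\bigl(\rho^{-j} x\bigr) = \bigl(a_j + \rho^{j(1+\alpha)} \tilde a\bigr) + \bigl(b_j + \rho^{j\alpha}\tilde b\bigr)\cdot x,
$$
the coefficient estimates $|a_{j+1} - a_j| \leq C\rho^{j(1+\alpha)}$ and $|b_{j+1} - b_j| \leq C\rho^{j\alpha}$ follow immediately, and rescaling back yields the $L^\infty$-closeness on $B_{\rho^{j+1}}$ via $|u(y) - l_{j+1}(y)| = \rho^{j(1+\alpha)} |v(y/\rho^j) - \tilde l(y/\rho^j)| \leq \rho^{(j+1)(1+\alpha)}$.

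The central difficulty, and the point where the hypothesis $\alpha \leq \frac{1}{1+\sup_{B_1} p(x)}$ is genuinely used, lies in checking that $\tilde f_j$ remains admissible at every step, uniformly in $j$. Precisely, the exponent $1 - \alpha(1 + \tilde p_j(x))$ must stay non-negative so that $\|\tilde f_j\|_{L^\infty(B_1)} \leq \|f\|_{L^\infty(B_1)} \leq \delta$, which is exactly the condition $\alpha(1 + p(\rho^j x)) \leq 1$. Once this is secured, Lemma \ref{lem2-5} is legitimately applicable at every iteration with the same $\rho$ and $\delta$, propagating the geometric decay. The verification that $v$ is indeed a viscosity solution of the rescaled equation is routine change-of-variables for viscosity tests, but one must be attentive to the fact that the shift $\xi_j$ may grow with $j$ (since $b_j$ is bounded while $\rho^{-j\alpha}$ blows up), which is harmless because Lemma \ref{lem2-5} is formulated for arbitrary $\xi \in \mathbb{R}^n$.
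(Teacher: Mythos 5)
Your proposal is correct and follows essentially the same route as the paper: induction with base case from Lemma \ref{lem2-5}, the rescaling $v(x)=\rho^{-j(1+\alpha)}(u(\rho^j x)-l_j(\rho^j x))$ producing an equation of type \eqref{2-1} with shift $\xi_j=\rho^{-j\alpha}b_j$, the verification $\|\tilde f_j\|_{L^\infty(B_1)}\le\delta$ via $\alpha\le\frac{1}{1+\sup_{B_1}p(x)}$, and the update $a_{j+1}=a_j+\rho^{j(1+\alpha)}\tilde a$, $b_{j+1}=b_j+\rho^{j\alpha}\tilde b$. Your formula for the rescaled coefficient, $\tilde a_j(x)=\rho^{j\alpha(\tilde q_j(x)-\tilde p_j(x))}a(\rho^j x)$, is in fact the correct one (the paper's displayed exponent omits the factor $j\alpha$, an apparent typo of no consequence since either way $0\le\tilde a_j\le\|a\|_{C(B_1)}$).
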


\begin{proof}
Arguing by induction. Obviously, this claim holds for $j=1$ by Lemma \ref{lem2-5} ($a_0=0,b_0=0$). Suppose that this conclusion holds true for $j=1,2,\cdots,k$. Now we are going to justify that for $j=k+1$. Set $u_k(x): B_1\rightarrow \mathbb{R}$
$$
u_k(x):=\frac{u(\rho^k x)-l_k(\rho^k x)}{\rho^{k(1+\alpha)}}.
$$
We can readily check that $u_k$ solves in the viscosity sense
$$
\big[|\xi_k+Du_k|^{p_k(x)}+a_k(x)|\xi_k+Du_k|^{q_k(x)}\big]F_k(D^2u_k)=f_k(x) \quad \text{in } B_1,
$$
where
\begin{align*}
&F_k(M):=\rho^{k(1-\alpha)}F(\rho^{k(\alpha-1)}M),\\
\smallskip
&f_k(x):=\rho^{k(1-\alpha(1+p_k(x)))}f(\rho^k x),\\
\smallskip
&a_k(x):=\rho^{q_k(x)-p_k(x)}a(\rho^k x)
\end{align*}
and
$$
p_k(x):=p(\rho^k x),\quad q_k(x):=q(\rho^k x), \quad \xi_k:=\rho^{-k\alpha}b_k.
$$
Obviously, $F_k(\cdot)$  is also a uniformly $(\lambda,\Lambda)$-elliptic operator. By induction, we can see that
\begin{align*}
&\|u_k\|_{L^\infty(B_1)}\leq 1, \quad \|p_k\|_{L^\infty(B_1)}\leq \sup_{B_1}p(x), \\
&\|q_k\|_{L^\infty(B_1)}\leq \sup_{B_1}q(x), \quad \|a_k\|_{C(B_1)}\leq \|a\|_{C(B_1)}.
\end{align*}
Moreover, in view of the choice of $\alpha$ we easily estimate
\begin{align*}
\|f_k(x)\|_{L^\infty(B_1)}&=\|\rho^{k(1-\alpha(1+p_k(x)))}f(\rho^k x)\|_{L^\infty(B_1)}\\
&\leq \delta\rho^{k(1-\alpha(1+\sup_{B_1}p(x)))}\leq\delta.
\end{align*}
Then the smallness assumption in Lemma \ref{lem2-5} is satisfied. Thus there is an affine function $\tilde{l}(x)=\tilde{a}+\tilde{b}\cdot x$ with $|\tilde{a}|+|\tilde{b}|\leq C(n,\lambda,\Lambda)$ such that
\begin{equation}
\label{2-16}
\|u_k(x)-\tilde{l}(x)\|_{L^\infty(B_\rho)}\leq \rho^{1+\alpha}.
\end{equation}
In the sequel, we denote
$$
l_{k+1}:=a_{k+1}+b_{k+1}\cdot x,
$$
where
$$
a_{k+1}=a_k+\rho^{k(1+\alpha)}\tilde{a} \quad\text{and}\quad b_{k+1}=b_k+\rho^{k\alpha}\tilde{b}.
$$
Thus, scaling \eqref{2-16} back, we reach that
\begin{align*}
&\|u-l_{k+1}\|_{L^\infty(B_{\rho^{k+1}})}\leq \rho^{(k+1)(1+\alpha)}, \\
&|a_{k+1}-a_k|=|\rho^{k(1+\alpha)}\tilde{a}|\leq C\rho^{k(1+\alpha)}
\end{align*}
and
\begin{align*}
|b_{k+1}-b_k|=|\rho^{k\alpha}\tilde{b}|\leq C\rho^{k\alpha}.
\end{align*}
The proof is finished.
\end{proof}

\begin{corollary}
\label{cor2-7}
Under the hypotheses of Lemma \ref{lem2-6}, we will deduce that there exists an affine function $\overline{l}(x)=\overline{a}+\overline{b}\cdot x$ with
$$
|\overline{a}|+|\overline{b}|\leq C \quad\quad (\overline{a}\in \mathbb{R},\overline{b}\in\mathbb{R}^n)
$$
such that for each $0<r\leq \rho$ with $\rho$ being  identical to that in Lemma \ref{lem2-6}
$$
\|u-\overline{l}\|_{L^\infty(B_r)}\leq Cr^{1+\alpha},
$$
where $C$ depends only on $n,\lambda,\Lambda,\alpha$.
\end{corollary}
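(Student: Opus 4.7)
The plan is to extract the limiting affine function from the sequence $\{l_j\}$ produced by Lemma \ref{lem2-6} and then estimate the error at arbitrary scales $r \in (0,\rho]$ by comparing with the closest dyadic scale $\rho^j$.

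First I would verify that the coefficient sequences $\{a_j\}$ and $\{b_j\}$ are Cauchy. From the telescoping bounds in Lemma \ref{lem2-6}, together with $0<\rho<1/2$, both $\sum_j \rho^{j(1+\alpha)}$ and $\sum_j \rho^{j\alpha}$ are convergent geometric series, so $a_j \to \overline{a}$ in $\mathbb{R}$ and $b_j \to \overline{b}$ in $\mathbb{R}^n$. Summing the geometric tails starting from $a_0=0$, $b_0=0$ gives the universal bound
$$
|\overline{a}| + |\overline{b}| \leq C\sum_{j\geq 0}\rho^{j\alpha} \leq \frac{C}{1-\rho^{\alpha}} =: C(n,\lambda,\Lambda,\alpha).
$$
Set $\overline{l}(x):=\overline{a}+\overline{b}\cdot x$.

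Next, for fixed $j\geq 1$ I would estimate $|a_j-\overline{a}|$ and $|b_j-\overline{b}|$ by summing the remaining tails:
$$
|a_j-\overline{a}| \leq \sum_{i\geq j} C\rho^{i(1+\alpha)} \leq C\rho^{j(1+\alpha)}, \qquad |b_j-\overline{b}| \leq \sum_{i\geq j} C\rho^{i\alpha} \leq C\rho^{j\alpha},
$$
with $C$ depending on $n,\lambda,\Lambda,\alpha$ (absorbing the $1/(1-\rho^{\alpha})$ factor).

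Now given $r \in (0,\rho]$, I would pick the integer $j\geq 1$ such that $\rho^{j+1} < r \leq \rho^{j}$, so that $B_r \subset B_{\rho^j}$ and $\rho^j < r/\rho$. Triangle inequality gives
$$
\|u-\overline{l}\|_{L^\infty(B_r)} \leq \|u-l_j\|_{L^\infty(B_{\rho^j})} + \|l_j-\overline{l}\|_{L^\infty(B_r)}.
$$
The first term is $\leq \rho^{j(1+\alpha)}$ by Lemma \ref{lem2-6}. For the second, since $|x|\leq r \leq \rho^j$,
$$
\|l_j-\overline{l}\|_{L^\infty(B_r)} \leq |a_j-\overline{a}| + r|b_j-\overline{b}| \leq C\rho^{j(1+\alpha)} + r\cdot C\rho^{j\alpha} \leq C\rho^{j(1+\alpha)}.
$$
Combining and using $\rho^{j(1+\alpha)} \leq \rho^{-(1+\alpha)}r^{1+\alpha}$, we obtain
$$
\|u-\overline{l}\|_{L^\infty(B_r)} \leq Cr^{1+\alpha}
$$
with $C=C(n,\lambda,\Lambda,\alpha)$, which is the desired estimate.

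The only mildly delicate point is bookkeeping the dependence of the constants: one must make sure the geometric-series constant absorbs into a single $C(n,\lambda,\Lambda,\alpha)$, and that the passage from discrete scales $\rho^j$ to continuous $r$ loses only a harmless factor $\rho^{-(1+\alpha)}$. Everything else is a straightforward consequence of the iterative flatness estimate already established in Lemma \ref{lem2-6}.
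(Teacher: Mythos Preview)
Your argument is correct and follows essentially the same route as the paper's own proof: pass to the limit of the Cauchy sequences $\{a_j\},\{b_j\}$, control the tails $|a_j-\overline{a}|$ and $|b_j-\overline{b}|$ by geometric series, choose $j$ with $\rho^{j+1}<r\le\rho^j$, and combine the discrete flatness bound with the affine tail estimate via the triangle inequality. The only cosmetic difference is that the paper estimates $\|l_j-\overline{l}\|$ on $B_{\rho^j}$ rather than $B_r$ and displays the geometric-series constants explicitly, but the substance is identical.
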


\begin{proof}
From Lemma \ref{lem2-6}, we know that $\{a_j\},\{b_j\}$ are Cauchy sequences in $\mathbb{R}$ and in $\mathbb{R}^n$, respectively. Denote
\begin{align*}
\overline{a}=\lim_{j\rightarrow\infty}a_j, \quad \overline{b}=\lim_{j\rightarrow\infty}b_j.
\end{align*}
For any $m\geq j$, we have
\begin{align*}
|a_j-a_m|&\leq|a_j-a_{j+1}|+|a_{j+1}-a_{j+2}|+\cdots+|a_{m-1}-a_m|\\
&\leq C\rho^{j(1+\alpha)}+C\rho^{(j+1)(1+\alpha)}+\cdots+C\rho^{(m-1)(1+\alpha)}\\
&=C\rho^{j(1+\alpha)}\frac{1-\rho^{(m-j)(1+\alpha)}}{1-\rho^{1+\alpha}}.
\end{align*}
Letting $m\rightarrow\infty$, we get
$$
|a_j-\overline{a}|\leq C\frac{\rho^{j(1+\alpha)}}{1-\rho^{1+\alpha}}.
$$
Similarly,
$$
|b_j-\overline{b}|\leq C\frac{\rho^{j\alpha}}{1-\rho^\alpha}.
$$

Now fixing a $0<r\leq \rho$, we can take $j\in\mathbb{N}$ such that
$$
\rho^{j+1}<r\leq \rho^j.
$$
Furthermore,
\begin{align*}
\|u(x)-\overline{l}(x)\|_{L^\infty(B_r)}&\leq \|u(x)-\overline{l}(x)\|_{L^\infty(B_{\rho^j})}\\
&\leq\|u(x)-l_j(x)\|_{L^\infty(B_{\rho^j})}+\|l_j(x)-\overline{l}(x)\|_{L^\infty(B_{\rho^j})}\\
&\leq \rho^{j(1+\alpha)}+|a_j-\overline{a}|+\rho^j|b_j-\overline{b}|\\
&\leq \rho^{j(1+\alpha)}+\frac{C}{1-\rho^\alpha}\rho^{j(1+\alpha)}\\
&\leq \frac{1}{\rho^{1+\alpha}}\left(1+\frac{C}{1-\rho^\alpha}\right)r^{1+\alpha}.
\end{align*}
Now we complete the proof.
\end{proof}

From Corollary \ref{cor2-7}, we have known that the solution to \eqref{main}, $u$, is $C^{1,\alpha}$-regular around the origin. Then we could verify that $u$ is also $C^{1,\alpha}$-regular for every point of $B_{1/2}$ by a standard translation argument, which means that $u\in C^{1,\alpha}(B_{1/2})$. Consequently, we deduce Theorem \ref{thm0} by making use of a covering argument.

\section{H\"{o}lder continuity of solutions}
\label{sec3}

In this section, we give the proof of local H\"{o}lder estimates for viscosity solutions to \eqref{2-1}. Indeed, Proposition \ref{pro2-3} is a plain consequence of Lemmas \ref{lem2-1} and \ref{lem2-2}, which yields compactness with respect to uniform convergence. First, using Ishii-Lions method, we demonstrate that the viscosity solutions to \eqref{2-1} are Lipschitz continuous if $|\xi|$ is large enough.

\begin{lemma}
\label{lem2-1}
Suppose that $u\in C(B_1)$ is a normalized viscosity solution to \eqref{2-1}. If $|\xi|>M_0$ with $M_0>0$ depending on $n,\lambda,\Lambda,\sup_{B_1}p(x),r$, then $u\in C^{0,1}_{\rm loc}(B_1)$.
\end{lemma}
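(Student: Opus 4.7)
The plan is to run the classical Ishii-Lions doubling-of-variables argument, exploiting the hypothesis $|\xi|>M_0$ to push the equation into a non-degenerate regime at the contact point. I fix $B_r\subset\subset B_1$ and $x_0\in B_{r/2}$, and for constants $L,\sigma>0$ to be chosen, consider on $\bar B_r(x_0)\times\bar B_r(x_0)$ the auxiliary function
\[
\Phi(x,y):=u(x)-u(y)-L\phi(|x-y|)-\tfrac{\sigma}{2}\bigl(|x-x_0|^2+|y-x_0|^2\bigr),
\]
where $\phi$ is a $C^2$ strictly concave modulus with $\phi(0)=0$, $\phi'(0)=1$, $1/2\le\phi'\le 1$ and $\phi''\le -c_0<0$ on a neighbourhood of the origin (for instance $\phi(s)=s-s^2/4$). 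The target is to show that $\sup\Phi\le 0$ for a sufficiently large $L=L(n,\lambda,\Lambda,\sup_{B_1}p,r)$; sending $\sigma\to 0$ then yields $|u(x)-u(y)|\le L|x-y|$ on $B_{r/2}(x_0)$ and hence local Lipschitz regularity.

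Arguing by contradiction, suppose $\sup\Phi>0$ is attained at some interior $(\bar x,\bar y)$ with $\bar x\ne\bar y$ (the localisation term keeps the maximum interior for $\sigma$ small relative to $r$). Since $\|u\|_{L^\infty(B_1)}\le 1$, the constraint $L\phi(|\bar x-\bar y|)\le 2$ forces $s:=|\bar x-\bar y|$ to be small once $L$ is large. The Jensen-Ishii lemma then produces, for each $\iota>0$, matrices $X,Y\in Sym(n)$ in the closure of the appropriate semijets of $u$ at $\bar x$ and $\bar y$, with gradients $p_x=L\phi'(s)\omega+\sigma(\bar x-x_0)$, $p_y=L\phi'(s)\omega-\sigma(\bar y-x_0)$, $\omega=(\bar x-\bar y)/s$, satisfying
\[
\begin{pmatrix}X&0\\0&-Y\end{pmatrix}\le\begin{pmatrix}LZ&-LZ\\-LZ&LZ\end{pmatrix}+\sigma I_{2n}+O(\iota),\quad Z=\phi''(s)\,\omega\otimes\omega+\frac{\phi'(s)}{s}(I-\omega\otimes\omega).
\]
Testing this inequality against $(v,v)$ yields the uniform bound $X-Y\le 2\sigma I+O(\iota)$, capping \emph{every} eigenvalue of $X-Y$; testing against $(\omega,-\omega)$ gives $\langle(X-Y)\omega,\omega\rangle\le 4L\phi''(s)+O(\sigma+\iota)$, so $X-Y$ carries at least one strongly negative eigenvalue of magnitude $\gtrsim c_0 L$.

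Now I invoke the non-degeneracy hypothesis. Setting $M_0:=4L+1$, the assumption $|\xi|>M_0$ forces $|p_x+\xi|,|p_y+\xi|\ge|\xi|/2\ge M_0/2\ge 1$, so that $H(z,v):=|v|^{p(z)}+a(z)|v|^{q(z)}\ge 1$ at both contact points. Dividing the viscosity sub/supersolution inequalities
\[
H(\bar x,p_x+\xi)F(X)\ge f(\bar x),\qquad H(\bar y,p_y+\xi)F(Y)\le f(\bar y)
\]
by the respective values of $H$ and using $\|f\|_{L^\infty(B_1)}\le\varepsilon$ yields $|F(X)|,|F(Y)|\le 2^{\sup_{B_1}p}\varepsilon$. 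Combining this with uniform $(\lambda,\Lambda)$-ellipticity and Pucci's inequality, together with the spectral information on $X-Y$, produces
\[
-2\cdot 2^{\sup_{B_1}p}\varepsilon\le F(X)-F(Y)\le P^+_{\lambda,\Lambda}(X-Y)\le 2(n-1)\Lambda\sigma+4\lambda L\phi''(s)+O(\iota).
\]
Since $\phi''(s)\le -c_0$, choosing $L$ large enough in terms of $n,\lambda,\Lambda,\sup_{B_1}p,r,c_0$ and then $\sigma,\iota$ sufficiently small makes the right-hand side strictly less than the left-hand side, a contradiction.

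The main obstacle is the matrix step: simultaneously extracting the uniform cap $X-Y\le 2\sigma I$ (which controls every positive eigenvalue contributing to $P^+_{\lambda,\Lambda}(X-Y)$) and the single strongly negative eigenvalue in the $\omega$-direction of size $\sim L$. Once these two spectral facts are in hand, Pucci's inequality converts them into a negative upper bound on $F(X)-F(Y)$ large enough to beat the $O(\varepsilon)$ lower bound forced by the viscosity inequalities in the non-degenerate regime. The quantitative balancing of $L$, $\sigma$, $\iota$, and the relation $M_0\sim L$ is delicate but standard; note in particular that $L$ may be chosen independently of $\xi$ (depending only on the listed data), after which $M_0$ is fixed as a multiple of $L$.
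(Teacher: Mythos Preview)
Your approach is the paper's: an Ishii--Lions doubling argument where the hypothesis $|\xi|>M_0\sim L$ makes the degeneracy weight $H$ bounded below at the contact point, after which the Pucci inequality on $X-Y$ (one strongly negative eigenvalue of size $\sim L$, all others uniformly bounded) yields a contradiction for $L$ large. The paper uses the modulus $\upsilon(s)=s-\omega_0 s^{3/2}$ rather than your $s-s^2/4$, but this is cosmetic.

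There is, however, one slip in your handling of the localisation parameter~$\sigma$. You write that the maximum is interior ``for $\sigma$ small relative to $r$'', and later propose to send $\sigma\to 0$ and to take ``$\sigma$ sufficiently small'' in the final contradiction. In fact $\sigma$ must be taken \emph{large} (of order $r^{-2}$) to force the maximiser into the interior: from $\Phi(\bar x,\bar y)>0$ and $\|u\|_{L^\infty}\le 1$ one gets $\tfrac{\sigma}{2}|\bar x-x_0|^2\le 2$, hence interiority requires $\sigma\ge 8/r^2$. This is exactly what the paper does, fixing $M_2=(4\sqrt 2/r)^2$ once and for all. With $\sigma$ fixed, the contradiction is obtained by choosing $L$ large relative to $\sigma$ (not by making $\sigma$ small), and Lipschitz continuity follows without any limit in $\sigma$ simply by taking $x_0=y$ in the inequality $\sup\Phi\le 0$, which gives $u(x)-u(y)\le L\phi(|x-y|)+\tfrac{\sigma}{2}|x-y|^2$. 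Correcting this, your argument matches the paper's.
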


\begin{proof}
Fix $0<r<1$. We are going to show that there exist two positive constants $M_1,M_2$ satisfying
\begin{equation}
\label{2-2}
G(x_0):=\sup_{B_r(x_0)\times B_r(x_0)}(u(x)-u(y)-M_1\upsilon(|x-y|)-M_2(|x-x_0|^2+|y-x_0|^2))\leq0
\end{equation}
for each $x_0\in B_{1/2}$, where
\begin{equation*}
\upsilon(s)=
 \begin{cases}
s-\omega_0s^\frac{3}{2}  & \text{\textmd{if }} 0\leq s\leq s_0, \\
\upsilon(s_0)  & \text{\textmd{if }} s>s_0
\end{cases}
\end{equation*}
with $\omega_0>0$ such that $s_0:=\left(\frac{2}{3\omega_0}\right)^2\geq1$. Here $\omega_0$ is actually a fixed quantity, such as $\frac 13$. Thriving for contradiction. Let us assume that there is $x_0'\in B_\frac{1}{2}$ satisfying $G(x_0')>0$ for any $M_1,M_2>0$. Set
$$
\varphi(x,y):=M_1\upsilon(|x-y|)+M_2(|x-x_0'|^2+|y-x_0'|^2)
$$
and
$$
\psi(x,y):=u(x)-u(y)-\varphi(x,y).
$$
Denote by $(\hat{x},\hat{y})$ the maximum point of $\psi(x,y)$ in $\overline{B_r(x_0')}\times\overline{B_r(x_0')}$. We can choose $M_2=\left(\frac{4\sqrt{2}}{r}\right)^2$ so that
$$
|\hat{x}-x_0'|+|\hat{y}-x_0'|\leq\frac{r}{2}.
$$
Hence $(\hat{x},\hat{y})\in B_r(x_0)\times B_r(x_0)$ and obviously $\hat{x}\neq\hat{y}$.

Next, we use the Ishii-Lions lemma (\cite{CIL92}, Theorem 3.2) to obtain a limiting subject $(\hat{\xi}_1,X)$ of $u$ at $\hat{x}$ and a limiting superjet $(\hat{\xi}_2,Y)$ of $u$ at $\hat{y}$, such that the matrices $X,Y$ verify the inequality
\begin{equation}
\label{2-3}
\left(\begin{array}{cc}
X & \\[2mm]
 &-Y
\end{array}
\right)\\[2mm]
\leq \left(\begin{array}{cc}
A &-A\\[2mm]
-A &A
\end{array}\right)+(2M_2+\kappa)I
\end{equation}
with $\kappa>0$, that depends on the norm of $A$, being small enough. Here
$$
A:=M_1\left[\frac{\upsilon'(|\hat{x}-\hat{y}|)}{|\hat{x}-\hat{y}|}I+\left(\upsilon''(|\hat{x}-\hat{y}|)
-\frac{\upsilon'(|\hat{x}-\hat{y}|)}{|\hat{x}-\hat{y}|}\right)\frac{(\hat{x}-\hat{y})\otimes(\hat{x}-\hat{y})}{|\hat{x}-\hat{y}|^2}\right]
$$
and
$$
\hat{\xi}_1:=M_1\upsilon'(|\hat{x}-\hat{y}|)\frac{\hat{x}-\hat{y}}{|\hat{x}-\hat{y}|}+2M_2(\hat{x}-x'_0),
$$
$$
\hat{\xi}_2:=M_1\upsilon'(|\hat{x}-\hat{y}|)\frac{\hat{x}-\hat{y}}{|\hat{x}-\hat{y}|}-2M_2(\hat{y}-x'_0).
$$
From \eqref{2-3} we could readily deduce that all eigenvalues of $X-Y$ are below $4M_2+2\kappa$, and at least one eigenvalue of $X-Y$ is below $4M_2+2\kappa+4M_1\upsilon''(|\hat{x}-\hat{y}|)$. Note that when the number $M_1$ is large enough, then this quantity is negative. Indeed, we can select $M_1\geq\frac{4M_2+2}{3\omega_0}$. Therefore, we get
\begin{align}
\label{2-4}
P^+(X-Y)&\leq \Lambda(n-1)(4M_2+2\kappa)+\lambda(4M_2+2\kappa+4M_1\upsilon''(|\hat{x}-\hat{y}|))\nonumber\\
&=2(\lambda+\Lambda(n-1))(2M_2+\kappa)+4\lambda M_1\upsilon''(|\hat{x}-\hat{y}|).
\end{align}
In addition, we have the following inequalities in the viscosity sense
\begin{equation}
\label{2-5}
\begin{cases}
\left(|\hat{\xi}_1+\xi|^{p(\hat{x})}+a(\hat{x})|\hat{\xi}_1+\xi|^{q(\hat{x})}\right)F(X)\geq f(\hat{x}),\\
\left(|\hat{\xi}_2+\xi|^{p(\hat{y})}+a(\hat{y})|\hat{\xi}_2+\xi|^{q(\hat{y})}\right)F(Y)\geq f(\hat{y}).
\end{cases}
\end{equation}
If $|\xi|\geq M_0>0$, where $M_0$ will be determined later, then we rewrite \eqref{2-5} as
\begin{equation}
\label{2-6}
\begin{cases}
\left(||\xi|^{-1}\hat{\xi}_1+e|^{p(\hat{x})}+|\xi|^{q(\hat{x})-p(\hat{x})}a(\hat{x})||\xi|^{-1}\hat{\xi}_1+e|^{q(\hat{x})}\right)F(X)\geq |\xi|^{-p(\hat{x})}f(\hat{x}),\\
\left(||\xi|^{-1}\hat{\xi}_2+e|^{p(\hat{y})}+|\xi|^{q(\hat{y})-p(\hat{y})}a(\hat{y})||\xi|^{-1}\hat{\xi}_2+e|^{q(\hat{y})}\right)F(Y)\geq |\xi|^{-p(\hat{y})}f(\hat{y}).
\end{cases}
\end{equation}
On the other hand, we can easily estimate
$$
|\hat{\xi}_1|,|\hat{\xi}_2|\leq M_1+M_2.
$$
Now we take $M_0=4(M_1+M_2)$ so that $|\xi|^{-1}\hat{\xi}_1,|\xi|^{-1}\hat{\xi}_2\leq \frac{1}{2}$.

Merging \eqref{2-6} with the uniform ellipticity of the operator $F(\cdot)$, it yields that
\begin{align*}
&\quad\frac{|\xi|^{-p(\hat{x})}f(\hat{x})}{||\xi|^{-1}\hat{\xi}_1+e|^{p(\hat{x})}+|\xi|^{q(\hat{x})-p(\hat{x})}a(\hat{x})||\xi|^{-1}\hat{\xi}_1+e|^{q(\hat{x})}}\\
&\leq F(X)\leq F(Y)+P^+(X-Y)\\
&\leq\frac{|\xi|^{-p(\hat{y})}f(\hat{y})}{||\xi|^{-1}\hat{\xi}_2+e|^{p(\hat{y})}+|\xi|^{q(\hat{y})-p(\hat{y})}a(\hat{y})||\xi|^{-1}\hat{\xi}_2+e|^{q(\hat{y})}}
+P^+(X-Y).
\end{align*}
Utilizing \eqref{2-4} we further derive
$$
-\varepsilon\cdot 2^{\sup_{B_1}p(x)}\leq \varepsilon\cdot2^{\sup_{B_1}p(x)}+2(\lambda+\Lambda(n-1))(2M_2+\kappa)-3\lambda M_1\omega_0|\hat{x}-\hat{y}|^{-\frac{1}{2}},
$$
then
\begin{equation}
\label{2-7}
3\lambda M_1\omega_0\leq 2^{1+\sup_{B_1}p(x)}+2(\lambda+\Lambda(n-1))(2M_2+1).
\end{equation}
Thus if we choose ahead of time
\begin{equation}
\label{2-7-1}
M_1\geq\max\left\{\frac{ 2^{1+\sup_{B_1}p(x)}+2(\lambda+\Lambda(n-1))(2M_2+1)}{3\lambda\omega_0},\frac{4M_2+2}{3\omega_0}\right\}+1,
\end{equation}
we can reach a contradiction with \eqref{2-7}.

As has been stated above, we verify the claim \eqref{2-2}, which implies $u$ is Lipschitz continuous and satisfies
$$
[u]_{0,1;B_r(x_0)}\leq C\left(n,\lambda,\Lambda,\sup_{B_1}p(x),r\right).
$$
The proof is now complete.
\end{proof}

Next, we verify, in the complementary case, that the solutions to \eqref{2-1} are $\beta$-H\"{o}lder continuous in a similar way.

\begin{lemma}
\label{lem2-2}
Suppose that $u\in C(B_1)$ is a normalized viscosity solution to \eqref{2-1}. If $|\xi|\leq M_0$ with $M_0$ being the same as that in Lemma \ref{lem2-1}, then for some $\beta\in(0,1)$
$$
u\in C^{0,\beta}_{\rm loc}(B_1).
$$
\end{lemma}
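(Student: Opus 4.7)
The plan is to imitate the Ishii--Lions doubling-variable argument used in Lemma~\ref{lem2-1}, but replace the concave penalization $\upsilon(s)=s-\omega_0 s^{3/2}$ by the classical H\"older penalization $s^{\beta}$ for a suitable fixed $\beta\in(0,1)$. Fix $x_0\in B_{1/2}$, $0<r<1$ and consider
$$
\psi(x,y):=u(x)-u(y)-M_1|x-y|^\beta-M_2\bigl(|x-x_0|^2+|y-x_0|^2\bigr).
$$
Arguing by contradiction, assume $\sup_{\overline{B_r(x_0)}\times\overline{B_r(x_0)}}\psi>0$. By choosing $M_2\simeq r^{-2}$ as in Lemma~\ref{lem2-1}, the maximizer $(\hat x,\hat y)$ lies well inside $B_r(x_0)\times B_r(x_0)$ with $\hat x\neq\hat y$. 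Because $\|u\|_{L^\infty(B_1)}\le1$, positivity of $\psi(\hat x,\hat y)$ forces $|\hat x-\hat y|\le(2/M_1)^{1/\beta}$, which becomes as small as we please once $M_1$ is large.

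Next I apply the Ishii--Lions lemma to produce jets $(\hat\xi_1,X)$ and $(\hat\xi_2,Y)$ whose slopes read
$$
\hat\xi_i=M_1\beta|\hat x-\hat y|^{\beta-2}(\hat x-\hat y)\pm 2M_2(\cdot-x_0),
$$
and whose matrix part is controlled by the Hessian of $|x-y|^\beta$. Since $\beta\in(0,1)$, the eigenvalue of the Hessian along the direction $\hat x-\hat y$ equals $M_1\beta(\beta-1)|\hat x-\hat y|^{\beta-2}$, which is strongly negative. Standard spectral bookkeeping in the Pucci operator then gives
$$
P^+_{\lambda,\Lambda}(X-Y)\le C(n,\Lambda)\,M_2-c(\lambda)\,M_1\beta(1-\beta)|\hat x-\hat y|^{\beta-2}.
$$

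The crucial step is to match this with the viscosity inequalities. Using $|\xi|\le M_0$ and the bound $|\hat\xi_i|\ge M_1\beta|\hat x-\hat y|^{\beta-1}-2M_2r$, one checks for $M_1$ sufficiently large that
$$
|\hat\xi_i+\xi|\ge\tfrac12M_1\beta|\hat x-\hat y|^{\beta-1}\ge 1,
$$
so that the degenerate coefficient $|\hat\xi_i+\xi|^{p(\cdot)}+a(\cdot)|\hat\xi_i+\xi|^{q(\cdot)}\ge1$. Inverting it in both viscosity inequalities yields $F(X)\ge-\|f\|_{L^\infty}$ and $F(Y)\le\|f\|_{L^\infty}$, whence by $(\lambda,\Lambda)$-ellipticity
$$
-2\|f\|_{L^\infty(B_1)}\le F(X)-F(Y)\le P^+_{\lambda,\Lambda}(X-Y)\le C M_2-cM_1\beta(1-\beta)|\hat x-\hat y|^{\beta-2}.
$$

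Finally, combining this with the smallness bound $|\hat x-\hat y|\le(2/M_1)^{1/\beta}$ from Step~1 gives $M_1|\hat x-\hat y|^{\beta-2}\ge c\,M_1^{2/\beta}$, which grows faster than the term $CM_2+2\|f\|_{L^\infty}$ as $M_1\to\infty$. Choosing $M_1$ large enough (depending on $n,\lambda,\Lambda,\sup_{B_1}p,r,M_0$) yields the desired contradiction, so $\psi\le0$. Specializing $y=x_0$ produces the local H\"older estimate $[u]_{0,\beta;B_{r/2}(x_0)}\le C$, and a covering argument delivers $u\in C^{0,\beta}_{\mathrm{loc}}(B_1)$.

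The main obstacle is \emph{guaranteeing the degenerate coefficient stays bounded below}, which is precisely where the hypothesis $|\xi|\le M_0$ matters: since $\xi$ may almost cancel $\hat\xi_i$, the rescue comes from the blow-up $|\hat x-\hat y|^{\beta-1}\to\infty$ as $M_1\to\infty$, forcing $|\hat\xi_i|$ to dwarf $|\xi|$. The exponent $\beta$ itself does not need to be carefully tuned; any value in $(0,1)$ works because the power $M_1^{2/\beta}$ on the contradiction side is always super-linear in $M_1$.
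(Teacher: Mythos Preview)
Your proposal is correct and follows essentially the same Ishii--Lions doubling argument as the paper, replacing the Lipschitz modulus by $\upsilon(s)=s^{\beta}$ and exploiting the blow-up of $|\hat x-\hat y|^{\beta-1}$ to overwhelm the bounded $|\xi|\le M_0$. The only noteworthy differences are cosmetic: the paper fixes $\beta\in(1/4,3/4)$ and bounds $|\hat x-\hat y|$ via the localization radius $r$, whereas you allow any $\beta\in(0,1)$ and instead use the positivity constraint $M_1|\hat x-\hat y|^{\beta}\le 2$ to force $|\hat x-\hat y|$ small; both routes lead to the same contradiction.
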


\begin{proof}

The outline of this proof is similar to that in Lemma \ref{lem2-1}. At this point, it is worth observing that $\upsilon(s)=s^\beta$ with $\beta\in(\frac{1}{4},\frac{3}{4})$ and
\begin{equation}
\label{2-8}
\begin{split}
P^+(X-Y)&\leq (\lambda+\Lambda(n-1))(4M_2+2\kappa)-4\lambda M_1\beta(1-\beta)|\hat{x}-\hat{y}|^{\beta-2}\\
&\leq (\lambda+\Lambda(n-1))(4M_2+2)-4\lambda\beta(1-\beta)M_1.
\end{split}
\end{equation}
Moreover, it is easy to see
\begin{align}
\label{2-9}
|\hat{\xi}_1+\xi|&\geq M_1\beta|\hat{x}-\hat{y}|^{\beta-1}-2M_2-M_0 \nonumber\\
&=M_1(\beta|\hat{x}-\hat{y}|^{\beta-1}-4)-6M_2 \nonumber\\
&\geq M_1\left(\beta\left(\frac{r}{2}\right)^{\beta-1}-4\right)-6M_2 \nonumber\\
&\geq C_1M_2>1,
\end{align}
where the last inequality holds true if $r$ is sufficiently small. Analogously,
\begin{equation}
\label{2-10}
|\hat{\xi}_2+\xi|\geq C_2M_2>1.
\end{equation}

From \eqref{2-5} and the uniform ellipticity of $F(\cdot)$, we have
\begin{align*}
&\quad \frac{f(\hat{x})}{|\hat{\xi}_1+\xi|^{p(\hat{x})}+a(\hat{x})|\hat{\xi}_1+\xi|^{q(\hat{x})}}\\
&\leq F(X)\\
&\leq\frac{f(\hat{y})}{|\hat{\xi}_2+\xi|^{p(\hat{y})}+a(\hat{y})|\hat{\xi}_2+\xi|^{q(\hat{y})}}
+P^+(X-Y).
\end{align*}
We can further infer from \eqref{2-8},\eqref{2-9} and \eqref{2-10} that
\begin{equation*}
\frac{-\varepsilon}{(C_1M_2)^{\inf_{B_1}p(x)}}\leq \frac{\varepsilon}{(C_2M_2)^{\inf_{B_1}p(x)}}
+(\lambda+\Lambda(n-1))(4M_2+2)-4\lambda\beta(1-\beta)M_1,
\end{equation*}
after rearrangement, getting
\begin{equation}
\label{2-11}
\begin{split}
 \frac{1}{4}\lambda M_1&\leq4\lambda\beta(1-\beta)M_1\leq (\lambda+\Lambda(n-1))(4M_2+2)+\frac{2}{(CM_2)^{\inf_{B_1}p(x)}}\\
 &\leq(\lambda+\Lambda(n-1))(4M_2+2)+2.
 \end{split}
\end{equation}
Thus from the selection of $M_1$ in \eqref{2-7-1}, then we will derive a contradiction with \eqref{2-11}.

Thereby, if $|\xi|\leq M_0$, then $u$ is $\beta$-H\"{o}lder continuous with the estimate
$$
[u]_{0,\beta;B_r(x_0)}\leq C\left(n,\lambda,\Lambda,r,\sup_{B_1}p(x)\right).
$$
We now finish the proof.
\end{proof}

\section*{Acknowledgements}
This work was supported by the National Natural Science Foundation of China (Nos. 12071098, 11871134). The work of Vicen\c tiu D.~R\u adulescu was also supported by a grant of the Romanian Ministry of Education and Research, CNCS-UEFISCDI, project number PN-III-P4-ID-PCE-2020-0068, within PNCDI III.

\end{document}